\DeclareMathAlphabet{\pazocal}{OMS}{zplm}{m}{n}
\theoremstyle{definition}
\newtheorem{definition}{Definition}[section]
\newtheorem{remark}[definition]{Remark}
\newtheorem{example}[definition]{Example}
\theoremstyle{plain}
\newtheorem{lemma}[definition]{Lemma}
\newtheorem{proposition}[definition]{Proposition}
\newtheorem{theorem}[definition]{Theorem}
\newtheorem{corollary}[definition]{Corollary}
\newtheorem{notation}[definition]{Notation}
\def\G{\mathcal G}
\def\g{\mathfrak{g}}
\def\h{\mathfrak{h}}
\def\affF{\mathop{\hbox{\bf F}}}
\def\affZ{\mathop{\hbox{\bf Z}}}
\def\affG{\mathop{\hbox{\bf G}}}
\def\affH{\mathop{\hbox{\bf H}}}
\def\affM{\mathop{\hbox{\bf M}}}
\def\affGL{\mathop{\hbox{\bf GL}}}
\def\lie{\mathop{\mathrm{Lie}}}
\def\op#1{{#1}^{\hbox{\tiny o}}}
\newcommand{\Z}{\mathrm{Z}}
\newcommand{\M}{\mathrm{M}}
\newcommand{\T}{\mathcal{T}}
\newcommand{\A}{\mathcal{A}}
\newcommand{\Bc}{\mathcal{B}}
\newcommand{\Mm}{\mathcal{M}}
\newcommand{\V}{\mathcal{V}}
\newcommand{\Tri}{\mathrm{Trian \,}}
\newcommand{\Id}{\mathrm{Id}}
\newcommand{\Imm}{\mathrm{Im}}
\newcommand{\aut}{\mathrm{Aut}}
\newcommand{\End}{\mathrm{End}}
\newcommand{\GL}{\mathrm{GL}}
\newcommand{\gl}{\mathrm{gl}}
\newcommand{\terder}{\mathrm{TerDer}}
\newcommand{\teraut}{\mathrm{TerAut}}
\newcommand{\innteraut}{\mathrm{InnTerAut}}
\newcommand{\der}{\mathrm{Der}}
\newcommand{\innder}{\mathrm{Innder}}
\newcommand{\outder}{\mathrm{Outder}}
\def\innaut{\mathop{\hbox{Innaut}}}
\newcommand{\ad}{\mathrm{ad}}
\newcommand{\inn}{\mathrm{In}}
\def\alg{\mathop{\hbox{\bf alg}}}
\def\grp{\hbox{\bf Grp}}
\def\set{\hbox{\bf Set}}
\def\affteraut{\mathop{\hbox{\bf TerAut}}}
\def\affaut{\mathop{\hbox{\bf Aut}}}
\def\affinnaut{\mathop{\hbox{\bf InnAut}}}
\def\affinnteraut{\mathop{\hbox{\bf InnTerAut}}}
\def\e{\varepsilon}
\def\a{\alpha}
\def\b{\beta}
\subjclass[2010]{15A78, 16W20, 16W25, 47B47}
\keywords{Triangular algebras, ternary (inner) derivations, ternary (inner) automorphisms.}
\begin{document}

\title[Ternary mappings of triangular algebras]{Ternary mappings of triangular algebras}

\author[Mart\'in Barquero, Mart\'in Gonz\'alez, S\'anchez-Ortega, Vandeyar]{Dolores Mart\'in Barquero$^{1}$, C\'andido Mart\'in Gonz\'alez$^{2}$, Juana S\'anchez-Ortega$^{3}$, Morgan Vandeyar$^{3}$}
\address[1]{Departamento de Matem\'atica Aplicada. Escuela de Ingenier\'\i as Industriales. Universidad de M\'alaga, Campus de Teatinos. 29071 M\'alaga,   Spain.}
\address[2]{Departamento de \'Algebra, Geometr\'ia y Topolog\'ia. Facultad de Ciencias. Universidad de M\'alaga, Campus de Teatinos. 29071 M\'alaga, Spain.}
\address[3]{Department of Mathematics and Applied Mathematics. University of Cape Town \\ Cape Town, South Africa.}
\email{dmartin@uma.es}
\email{candido@apncs.cie.uma.es}
\email{juana.sanchez-ortega@uct.ac.za}
\email{VYRMOR001@myuct.ac.za}

\maketitle


\begin{abstract}
We take a categorical approach to describe ternary derivations and ternary automorphisms of triangular algebras. New classes of automorphisms and derivations of triangular algebras are also introduced and studied. 
\end{abstract}


\section{Introduction}

Let $\A$ be an algebra, a triple of linear maps $(d_1, d_2, d_3)$ of $\A$ is a {\bf ternary derivation} of $\A$ if 
\[
d_1(xy) = d_2(x)y + xd_3(y),
\]
for all $x, y \in \A$. Clearly, if $d_1 = d_2 = d_3$, then $d_1$ is a derivation of $\A$. Perhaps a more interesting feature is the following: if  $d_1 = d_2$, then $d_1$ is a generalized derivation of $\A$. Generalized derivations were introduced by Bre\v sar \cite{B1} and further studied by Hvala \cite{H1, H2}.

The notion of a ternary derivation dates back to the Principle of Local Triality, which establishes that every element in 
\[
\mathfrak{o}(C, q) = \big \{d \in \End_F(C) | \, \, (d(x), y) + (x, d(y)) =0 \big \},
\] 
where $(C, (\cdot, \cdot))$ is a (generalized) octonion algebra, gives rise to a ternary derivation of $C$. See \cite[Theorem 3.31]{Schafer}, \cite[Section 1]{AlbertoTriality} and \cite{Chema} for details. The terminology of ternary derivations was introduced by Jimen\'ez-Gestal and P\'erez-Izquierdo \cite{JP} in 2003. They described ternary derivations of the generalized Cayley-Dickson algebras over a field of characteristic not 2 and 3. More recently, Shestakov \cite{Sh1, Sh2} described ternary derivations of separable associative and Jordan algebras and of Jordan superalgebras; see also \cite{ZS}. Leger and Luks \cite{LL} also studied ternary derivations of Lie algebras, but referred only to the component $d_2,$ calling it a generalized derivation (different from Bre\v{s}ar's definition).

Here, we focus our attention on ternary derivations and ternary automorphisms of triangular algebras, which were introduced by Chase \cite{Cha} in the early 1960s. Our motivation comes from the large number of papers devoted to the study of different maps of triangular algebras \cite{Bk1, Bk2, BM, Ch2, ChLie, CM, LB}. We use some notions from category theory and group functors, for which we refer the reader to \cite{Maclane, Pareigis, Waterhouse}.

The paper is organized as follows: in Section 2, we provide the required background on triangular algebras, on category theory and on group functors. Section 3 deals with ternary automorphisms, in this section   Theorem \ref{Candidothm1} states that every ternary automorphism of an arbitrary algebra $\A$ is on the form $(R_y L_x \sigma, L_x \sigma, R_y \sigma)$, for $\sigma$ an automorphism of $\A$ and $x, y$ invertible elements of $\A$; a description of ternary derivations of $\A$ is derived. Section 4 focuses on ternary derivations of triangular algebras; our results from Section 3 allow us to provide a precise description of them in Theorem \ref{terderchar}; the relationship between the components of a ternary derivation is studied (see Theorem \ref{components}), and a characterization for a ternary derivation to be inner is given in Theorem \ref{innerterder}. The last section deals with a new class of automorphisms of triangular algebras.


\section{Preliminaries}

Throughout the paper we consider unital associative algebras over a fixed commutative unital ring of scalars $R$.

Let $\Mm$ be an $R$-module and $\A$ a commutative, associative and unital $R$-algebra. We write $\Mm_\A$ to denote the $R$-module scalar extensions $\Mm \otimes_R \A$ of $\Mm$. The $R$-module $\hom_R(\Mm,\Mm_\A)$ is an $\A$-module with the obvious addition and product 
$af$, where $a \in \A$ and $f \in \hom_R(\Mm, \Mm_\A)$. There is an isomorphism of $\A$-modules $\theta_\A\colon \hom_R(\Mm, \Mm_\A)\to \hbox{End}_\A(\Mm_\A)$ given by $\theta_\A(f)(m\otimes a) = af(m)$ for any $a\in \A$ and $m\in \Mm$. As a consequence, if  $\a\colon \A\to \Bc$ is a homomorphism of $R$-algebras (where $\A$ and $\Bc$ are both commutative), then for any $T\in\text{End}_\A(\Mm\otimes \A)$ we can construct 
$S\in\text{End}_\Bc(\Mm\otimes \Bc)$ making commutative the following diagram:
\begin{equation}\label{bolo} 
\xygraph{ !{<0cm,0cm>;<1.5cm,0cm>:<0cm,1.2cm>::} 
!{(0,0) }*+{\Mm\otimes \A}="a"
!{(1.5,0) }*+{\Mm\otimes \A}="b" 
!{(0,-1.5)}*+{\Mm\otimes \Bc}="c" 
!{(1.5,-1.5)}*+{\Mm\otimes \Bc.}="d"
!{(.7,.2)}*+{_{T}}
"a":"b"  "a":_{\mathrm{Id}\otimes\a}"c"
"b":^{\mathrm{Id}\otimes\a}"d"  "c":_{S}"d"}
\end{equation}
In fact, let $i\colon \Mm\to \Mm \otimes \A$ given by $i(m) = m\otimes 1_\A$ and  consider the map $(\mathrm{Id}\otimes\a)Ti\colon \Mm\to \Mm\otimes \Bc$. Then $S:=\theta_\Bc((\mathrm{Id}\otimes\a)Ti)$ gives \begin{equation*}S(m\otimes 1_\Bc)=(\mathrm{Id}\otimes\a)T(m\otimes 1_\A), \end{equation*}
as desired.

\subsection{Triangular algebras} \label{preli}

Let $A$ and $B$ be algebras and $M$ a nonzero $(A, B)$-bimodule. The following set becomes a unital, associative algebra under the usual matrix operations:
\[
\Tri(A, M, B) = \left(
\begin{array}{@{}cc@{}}
A & M  \\
  & B
\end{array}
\right) =
\left\{
\left(
\begin{array}{@{}cc@{}}
a & m  \\
  & b
\end{array}
\right)\middle|  \, \, a \in A, m\in M, b \in B
\right\}.
\]
An algebra $\T$ is called a {\bf triangular algebra} if there exist algebras $A$, $B$ and a nonzero $(A, B)$-bimodule $M$ such that $\T$ is isomorphic to $\Tri(A, M, B)$. Examples of triangular algebras are (block) upper triangular matrix algebras, triangular Banach algebras and nest algebras. 

\begin{notation}\label{notation}
{\rm
Let $\T = \Tri(A, M, B)$ be a triangular algebra.
We write $1_A$, $1_B$ to denote the units of the algebras $A$, $B$, respectively. The unit of $\T$ is the element
$
1 := \left(
\begin{array}{@{}c@{\; \;}c@{}}
1_A & 0  \\
    & 1_B
\end{array}
\right).
$
The elements $p :=
\left(
\begin{array}{@{}c@{\; \;}c@{}}
1_A & 0  \\
    & 0
\end{array}
\right)$ and $q := 1 - p$ are orthogonal idempotents of $\T$.

We consider the Peirce decomposition $\T = p\T p \oplus p \T q \oplus q \T q$ of $\T$ associated to $p$. Note that $p\T p$, $q \T q$ are subalgebras of $\T$ isomorphic to $A$, $B$, respectively; while $p \T q$ is a $(p \T p, q \T q)$-bimodule isomorphic to $M$. 
}
\end{notation}

\subsection{Groups and set functors.} Let $\alg_R$ denote the category of associative commutative and unital $R$-algebras, $\set$ the category of sets and $\grp$ the category of groups. We call a functor from $\alg_R$ to $\set$ (respectively, $\grp$) a \textbf{set functor} (respectively, a \textbf{group functor}).


We write $\GL(\Mm)$ to denote the group of all invertible $R$-endomorphisms of an $R$-module $\Mm$. The \textbf{general linear group functor} $\affGL(\Mm)$ (see \cite[Examples 2.2]{Jantzen}, \cite[p. 186]{DG} or \cite[Section 1.8]{Milne} under certain special finiteness conditions) \textbf{associated to $\Mm$} is defined as 
\[
\affGL(\Mm)(\A) := \GL(\Mm_\A), 
\]
for all $\A \in \alg_R$, and for any $f\in\hom_{\alg_R}(\A,\Bc)$
we define 
\[
\affGL(\Mm)(f):= \hat f\colon \GL(\Mm_\A)\to\GL(\Mm_\Bc), 
\]
in the following way: given $T\in\GL(\Mm_\A)$, an application of \eqref{bolo} guarantees the existence of $S\in\hbox{End}_{\Bc}(\Mm)$ such that 
$S (\mathrm{Id}\otimes f)=(\mathrm{Id}\otimes f)T$.
It is then straightforward to check that  $S\in\GL(\Mm_{\Bc})$, so we can then define $\hat f(T)=S$. 
Consequently, we can write
$\hat f(T)(m\otimes 1_\Bc) = \big(\mathrm{Id} \otimes f \big)T(m\otimes 1_\A)$, for all $T \in \GL(\Mm_\A)$. 

For any $R$-module $\Mm$, the \textbf{set functor} $\affM$ \textbf{associated to} $\Mm$ is defined as $\affM(\A) := \Mm_\A$ and $\affM(f) := \mathrm{Id} \otimes f$, 
for all $\A \in \alg_R$ and $f \in \hom_{\alg_R}(\A,\Bc)$, where $\A, \Bc \in \alg_R$.

As a particular case of the general linear group functor, we can define a group functor
$\affGL_n(R)$ by $\affGL_n(R)(\A):= GL_n(\A)$, where $GL_n(\A)$ stands for the group of invertible $n\times n$ matrices with entries in $\A$, 
and $\affGL_n(R)(\a)(x_{ij}) = (\a(x_{ij}))$ for any $\a\in\hom_{\alg_R}(\A,\Bc)$. If $\Mm$ is a finite-dimensional $K$-vector space, then $\affGL(\Mm)\cong\affGL_n(K)$, where $n = \dim(\Mm)$.

\smallskip 

We write $R[\varepsilon]$ for the algebra of dual numbers over $R$, that is, $R[\varepsilon]:= R\oplus \varepsilon R$, where $\varepsilon^2=0$. Clearly, the map $\pi: R[\varepsilon] \to R$ given by $\pi(x + \varepsilon y) = x$ is an epimorphism of $R$-algebras. Given an $R$-module $\Mm$, the {\bf tangent $R$-module of $\affM$ at $n \in \Mm$} (see \cite[p. 257]{EH}) is the following $R$-module of $\affM(R[\varepsilon])$: 
\[
T_n(\affM) = \Big \{ m \in \Mm  \mid \, n + \varepsilon m \in \affM(R[\varepsilon]) \Big \}.
\]
It is straightforward to show that $\affM(R[\varepsilon]) = \Mm \oplus \varepsilon \Mm$, which gives $T_n(\affM) = \Mm$.

Let $\Mm$ be an $R$-module and $\affF$ a subfunctor of $\affGL(\Mm)$. The kernel of the
group homomorphism $\affF(\pi)$ is a Lie algebra, denoted $\lie(\affF)$. We then have a short exact sequence 
\[
0 \rightarrow \lie(\affF)\rightarrow\affF(R[\varepsilon])\buildrel {\affF(\pi)}\over{\rightarrow}\affF(R) \rightarrow 0,
\]
which is split since the inclusion map $\iota: R \to R[\varepsilon]$ induces $\affF(\iota)\colon \affF(R)\to\affF(R[\varepsilon])$ such that 
$\affF(\pi)\affF(\iota) = \mathrm{Id}$. Therefore, 
$\affF(R[\varepsilon])\cong \lie(\affF)\times \affF(R)$. 

Consider now $\affF$, a subfunctor of $\affGL(\V)$
for an $R$-module $\V$ and $\affM$ the set functor associated to an $R$-module $\Mm$, as before. If $f\colon\affF\to\affM$ is a natural transformation
and $p = f_R(1)$, then there is a map in $\set$ (so not necessarily linear)
$df_1\colon\lie(\affF)\to T_p(\M)$ such that
$f_{R[\e]}(1+\e x)=p+\e df_1(x)$ for any $x\in\lie(\affF)$.
In fact, the commutativity of the square 
\begin{center}
\begin{tikzcd}
\affF(R[\e])\arrow[d,"f_{R[\e]}"']\arrow[r, "\affF(\pi)"] & \affF(R)\arrow[d,"f_R"] \\ \affM(R[\e])\arrow[r,"\affM(\pi)"']& \affM(R).
\end{tikzcd}
\end{center}
gives that $x\in\lie(\affF)$ if and only if $1+\e x\in \affF(R[\e])$ and then 
$\affF(\pi)(1+\e x)=1$. Thus:
$$
p = f_R(1) = (f_R \affF(\pi))(1 + \e x) = \affM(\pi)\big(f_{R[\e]}(1 + \e x)\big),
$$
which implies that $f_{R[\e]}(1+\e x)=p+\e df_1(x)$, where 
$df_1\colon\lie(\affF) \to \affM(R) = \Mm$. 
\smallskip

It is well known that pullbacks exist in $\set$. More precisely, given $G, H \in \grp$, $Z \in \set$ and $f: G \to Z$ and $g: H \to Z$ morphisms in $\set$, the pullback of the pair $(f, g)$, denoted $G\times_Z H$, is the set $\{(x, y)| \, \,  x \in G, y \in H, f(x) = g(y)\}$. We then have a pullback square (where $\pi_1$ and $\pi_2$ refer to the usual projections): 
\begin{equation*}
\xygraph{ !{<0cm,0cm>;<1.5cm,0cm>:<0cm,1.2cm>::} 
!{(0,0) }*+{G\times_Z H}="a"
!{(1,0) }*+{H}="b" 
!{(0,-1)}*+{G}="c" 
!{(1,-1)}*+{Z}="d"
!{(.2,-.2)}*+{\lrcorner}
!{(.7,.2)}*+{_{\pi_2}}
"a":"b"  "a":_{\pi_1}"c"
"b":^{g}"d"  "c":_{f}"d"}
\end{equation*}
A sufficient condition for $G\times_Z H$ to be a group is given in the next result, which proof is straightforward and therefore omitted.

\begin{lemma} \label{req} 
Let $G, H \in \grp$, $Z \in \set$ and  $f\colon G\to Z$ and $g\colon H\to Z$ morphisms in $\set$. Assume that $f$ and $g$ satisfy the following conditions: 
\begin{enumerate}
\item[\rm (i)] $f(1_G) = g(1_H)$.
\item[\rm (ii)] $f(x) = g(y) and f(x') = g(y')$ imply $f(xx')=g(yy')$ for any $(x, y), (x', y') \in G \times H$.
\item[\rm (iii)] $f(x)=g(y)$  implies $f(x^{-1})=g(y^{-1})$ for any $(x, y) \in G \times H$.
\end{enumerate}
Then $G\times_Z H \in \grp$ with the componentwise product.
\end{lemma}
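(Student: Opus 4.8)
The plan is to recognize the pullback $G\times_Z H$ as a subgroup of the direct product group $G\times H$, equipped with the componentwise product $(x,y)(x',y')=(xx',yy')$. Since $G\times H$ is already a group under this operation, it suffices to check that the subset $G\times_Z H = \{(x,y)\mid f(x)=g(y)\}$ contains the identity element, is closed under the product, and is closed under taking inverses. The three hypotheses (i), (ii) and (iii) are tailored to supply exactly these three closure properties, respectively.

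First I would verify that the identity $(1_G,1_H)$ of $G\times H$ lies in $G\times_Z H$: this is immediate from (i), which asserts $f(1_G)=g(1_H)$. Next, for closure under multiplication, I would take arbitrary elements $(x,y),(x',y')\in G\times_Z H$, so that $f(x)=g(y)$ and $f(x')=g(y')$; hypothesis (ii) then yields $f(xx')=g(yy')$, meaning that $(x,y)(x',y')=(xx',yy')\in G\times_Z H$. Finally, for closure under inverses, given $(x,y)\in G\times_Z H$ with $f(x)=g(y)$, hypothesis (iii) gives $f(x^{-1})=g(y^{-1})$, whence $(x,y)^{-1}=(x^{-1},y^{-1})\in G\times_Z H$.

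Having established these three facts, $G\times_Z H$ is a subgroup of $G\times H$ and hence a group in its own right under the componentwise operation; associativity and the remaining group axioms are inherited from $G\times H$ and require no separate verification. There is no genuine obstacle here: the whole content of the statement is the observation that conditions (i)--(iii) are precisely the subgroup criterion applied to the pullback set sitting inside the product group, which is exactly why the authors regard the proof as routine and omit it.
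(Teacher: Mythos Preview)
Your proof is correct and is precisely the routine verification the authors have in mind; the paper explicitly omits the proof as ``straightforward,'' and your subgroup-criterion argument (identity from (i), closure under products from (ii), closure under inverses from (iii)) is exactly that straightforward check.
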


\begin{remark}\label{grgr}
A version of Lemma \ref{req} for Lie algebras follows: let $\g,\h$ be Lie algebras over $R$ and $Z$ an $R$-module. Suppose that the maps $\a\colon\h\to Z$ and $\b\colon\g\to Z$ satisfy: 
\begin{enumerate}
\item[\rm (i)'] $\a(0) = \b(0)$.
\item[\rm (ii)'] $\a(x) = \b(y)$ and $\a(x') = \b(y')$ imply $\a(x + x') = \b(y + y')$ and $\a([x, x'])=\b([y, y'])$, 
for any $(x, y), (x', y') \in \g \times \h$.
\item[\rm (iii)'] $\a(x)=\b(y)$ implies $\a(-x)=\b(-y)$, for any $(x, y) \in \g \times \h$.
\end{enumerate}
Then $\g\times_Z\h$ is a Lie algebra endowed with the componentwise operations.
\end{remark}

\begin{definition} \label{functorpullback}
Let $\affG$, $\affH$ be group functors and $\affZ$ the set functor associated to an $R$-module $Z$. Assume that $\psi\colon \affH\to\affZ$ and $\varphi\colon\affG\to\affZ$ are natural transformations. Then, we define a new set functor $\affG \times_{\mathbf{Z}} \affH$ as 
\[
\big(\affG \times_{\mathbf{Z}} \affH \big) (\A) := \affG(\A) \times_{\mathbf{Z}(\A)} \affH(\A),
\]
for any $\A \in \alg_R$, and for any $\xi\in\hom_{\alg_R}(\A,\Bc)$
we define 
\[
\big(\affG \times_{\mathbf{Z}} \affH \big)(\xi)(a) := \big(\affG(\xi)(a), \affH(\xi)(a) \big), \quad \forall \, a \in \A. 
\]
The fact that $\affG\times_{\mathbf{Z}}\affH$ is well defined on arrows uses the naturality of $\psi$ and $\varphi$, as well as, the fact that $\affG(\A)\times_{\mathbf{Z}(\A)}\times\affH(\A)$ is a pullback.
\end{definition}

\begin{proposition} \label{squareprop}
Let $\affG$, $\affH$ be group functors and $\affZ$ the set functor associated to an $R$-module $Z$. Suppose that $\varphi: \affG\to \affZ$ and $\psi: \affH \to \affZ$ are natural transformations such that 
$\varphi_\A: \affG(\A) \to \affZ(\A)$, 
$\psi_\A: \affH(\A) \to \affZ(\A)$ satisfy Lemma \ref{req} conditions for all $\A \in \alg_R$. Then $\affG \times_{\mathbf{Z}} \affH$ is a group functor with Lie algebra 
$\lie(\affG) \times_{\mathbf{Z}} \lie(\affH)$. In other words, $\lie(\affG) \times_{\mathbf{Z}} \lie(\affH)$ is the pullback of the square
\begin{equation*}
\xygraph{ !{<0cm,0cm>;<1.5cm,0cm>:<0cm,1.2cm>::} 
!{(0,0) }*+{\lie(\affG\times_{\mathbf{Z}}\affH)}="a"
!{(1.7,0) }*+{\lie(\affH)}="b" 
!{(0,-1)}*+{\lie(\affG)}="c" 
!{(1.7,-1)}*+{Z}="d"
!{(.2,-.3)}*+{\lrcorner}
!{(1,.2)}*+ {}
"a":"b"  "a":"c"
"b":^{d\psi}"d"  "c":_{d\varphi}"d"},
\end{equation*}
where $d\varphi: \lie(\affG) \to Z$ and $d\psi:\lie(\affH) \to Z $ refer to the differential maps at $1 \in \affG(R)$ and $1 \in \affH(R)$, respectively.
\end{proposition}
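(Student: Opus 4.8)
The plan is to prove the statement in two stages: first that $\affG \times_{\mathbf{Z}} \affH$ is a group functor, and then to identify its Lie algebra with the stated pullback.

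For the first stage, I would start from Definition \ref{functorpullback}, which already provides $\affG \times_{\mathbf{Z}} \affH$ as a well-defined set functor. To upgrade it to a group functor I would fix $\A \in \alg_R$ and apply Lemma \ref{req} to $\varphi_\A$ and $\psi_\A$, whose hypotheses hold by assumption; this equips $(\affG \times_{\mathbf{Z}} \affH)(\A) = \affG(\A) \times_{\affZ(\A)} \affH(\A)$ with a componentwise group structure. It then remains to check that each arrow $(\affG \times_{\mathbf{Z}} \affH)(\xi)$, for $\xi \in \hom_{\alg_R}(\A, \Bc)$, is a group homomorphism, which is immediate because the product is componentwise and $\affG(\xi)$, $\affH(\xi)$ are group homomorphisms; I expect no real difficulty here.

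For the second stage, I would first note that if $\affG$ and $\affH$ are subfunctors of $\affGL(\V_1)$ and $\affGL(\V_2)$, then $\affG \times_{\mathbf{Z}} \affH$ embeds as a subfunctor of $\affGL(\V_1 \oplus \V_2)$, so that $\lie(\affG \times_{\mathbf{Z}} \affH) = \ker\big((\affG \times_{\mathbf{Z}} \affH)(\pi)\big)$ is defined. Evaluating at $R[\e]$, a typical element is a pair $(a, b) \in \affG(R[\e]) \times \affH(R[\e])$ with $\varphi_{R[\e]}(a) = \psi_{R[\e]}(b)$; the kernel condition $\affG(\pi)(a) = 1$ and $\affH(\pi)(b) = 1$ forces $a = 1 + \e x$ and $b = 1 + \e y$ with $x \in \lie(\affG)$ and $y \in \lie(\affH)$. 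The crucial computation is to rewrite the compatibility constraint: setting $p := \varphi_R(1) = \psi_R(1)$ (the equality being Lemma \ref{req}(i)) and using the differential relations $\varphi_{R[\e]}(1 + \e x) = p + \e \, d\varphi(x)$ and $\psi_{R[\e]}(1 + \e y) = p + \e \, d\psi(y)$ inside $\affZ(R[\e]) = Z \oplus \e Z$, the condition $\varphi_{R[\e]}(a) = \psi_{R[\e]}(b)$ collapses to $d\varphi(x) = d\psi(y)$. This yields a bijection between $\lie(\affG \times_{\mathbf{Z}} \affH)$ and the pullback set $\lie(\affG) \times_Z \lie(\affH) = \{(x, y) \mid d\varphi(x) = d\psi(y)\}$.

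Finally, I would check that this bijection is an isomorphism of Lie algebras rather than merely a bijection of underlying sets. Since the bracket on the Lie algebra of a group functor arises from the commutator in $\affF(R[\e])$ and the group law on the pullback is componentwise, the bracket transports to the componentwise bracket on $\lie(\affG) \times_Z \lie(\affH)$, which is exactly the structure supplied by Remark \ref{grgr}. I expect the main obstacle to be precisely this last verification: one must confirm that the set maps $d\varphi$ and $d\psi$ satisfy conditions (i)$'$--(iii)$'$ of Remark \ref{grgr}, obtaining them by extracting the $\e$-linear parts of the identities guaranteed by the naturality of $\varphi$, $\psi$ together with Lemma \ref{req}, so that the pullback indeed carries the claimed Lie algebra structure.
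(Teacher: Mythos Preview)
Your plan matches the paper's approach: apply Lemma~\ref{req} objectwise to get the group functor, identify the kernel at $R[\e]$ with the pullback set $\{(x,y)\mid d\varphi(x)=d\psi(y)\}$, and then verify the conditions of Remark~\ref{grgr} for $d\varphi,d\psi$ so that this pullback is a Lie algebra. You also correctly flag the bracket compatibility (condition~(ii)$'$) as the nontrivial step.

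There is, however, one technical slip in your sketch of that step. You write that ``the bracket on the Lie algebra of a group functor arises from the commutator in $\affF(R[\e])$''. This is not usable as stated: in $R[\e]$ one has $\e^2=0$, so for $1+\e x,\,1+\e x'\in\affF(R[\e])$ the group commutator $\{1+\e x,\,1+\e x'\}$ is identically $1$, and no bracket information survives. The paper resolves this exactly here by passing to two independent infinitesimals, working in $\affG(R[\e]\otimes R[\e])$ with $\delta:=\e\otimes\e$; there one obtains $\{1+\e x,\,1+\e x'\}=1+\delta[x,x']$ (and similarly in $\affH$), and then Lemma~\ref{req}(ii) applied to $\varphi_{R[\e]\otimes R[\e]}$, $\psi_{R[\e]\otimes R[\e]}$ together with the differential relation yields $d\varphi([x,x'])=d\psi([y,y'])$. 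Once you replace your single-parameter commutator argument with this two-parameter one, your outline goes through and coincides with the paper's proof.
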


\begin{proof}
The result follows from Lemma \ref{req} and the considerations above. In fact, we check here that $d\varphi$ and $d\psi$ satisfy Remark \ref{grgr} (ii)'. Assume that $d\varphi(x) = d\psi(y)$ and $d\varphi(x') = d\psi(y')$, for any $x, x'\in\lie(\affG)$ and $y, y'\in\lie(\affH)$. From $1 + \e x, 1 + \e x'\in \affG(R[\e])$ we get that 
\begin{align*}
& 1+(1\otimes\e) x+(\e\otimes 1) x'+(\e\otimes\e) x x'\in G(R[\e]\otimes R[\e]), \mbox{ and}
\\
& 1-(1\otimes\e) x-(\e\otimes 1) x'+(\e\otimes\e) x x'\in G(R[\e]\otimes R[\e]),
\end{align*}
which gives $1 + \delta[x,x']\in G(R[\delta])$, where $\delta:= \e\otimes\e$. Similarly, $1+ \delta[y,y']\in H(R[\delta])$. On the other hand, we can write 
$$
1 + \delta [x, x'] = \{1 + \e x, 1 + \e x'\}, \qquad
1 + \delta [y, y'] = \{1 + \e y, 1 + \e y'\}
$$
where $\{g_1, g_2\} = g_1g_2g_1^{-1}g_2^{-1}$ is the commutator in the corresponding group. Then 
$$
\varphi(1 + \delta[x, x']) = \varphi(\{1 + \e x, 1 + \e x'\})=
\psi(\{1+\e y,1+\e y'\} = \psi(1+\delta[y,y']),
$$
which implies that 
$$\varphi(1) + \delta d\varphi([x,x'])=
\psi(1) + \delta d\psi([y,y']),$$
and (ii)' clearly follows. 
\end{proof}

%

\section{Ternary automorphisms of triangular algebras}

Ternary automorphisms and their infinitesimal counterpart, ternary derivations, were investigated
in the context of algebras obtained by the Cayley-Dickson process in \cite{JP}. 
However, the notion of a ternary automorphism was introduced much earlier by Petersson \cite{Petersson} (under the name of Albert autotopy) in the context of alternative algebras.

In this section, we develop the notions of ternary Lie automorphisms and introduce inner ternary automorphisms.

Let $\A$ be an algebra. A triple $(\sigma_1, \sigma_2, \sigma_3)$, where $\sigma_i \in \GL(\A)$ for all $i$, is said to be a {\bf ternary automorphism} of $\A$ if it satisfies one of the following equivalent conditions:
\begin{align*}
\sigma_1(xy) &= \sigma_2(x)\sigma_3(y),
\\
\sigma_1L_x &= L_{\sigma_2(x)}\sigma_3,  
\\
\sigma_1R_y &= R_{\sigma_3(y)}\sigma_2,
\end{align*}
for any elements $x, y$ in $\A$. Recall that $L_x(y):= xy = R_y(x)$ for any $x,y\in \A$, are the left and right multiplication operators. 
The set $\teraut(\A)$ consisting of all ternary automorphisms of $\A$ is a group under the componentwise composition.



In what follows, we take a functorial approach to study ternary autormorphisms. This turns out to be very useful, since it has allowed us to derive some results about ternary derivations as consequences of our results about ternary automorphisms.



The subfunctor $\affteraut(\A)$ of the group functor $\affGL(\A)\times\affGL(\A)\times \affGL(\A)$ is defined by 
$\affteraut(\A)(S) = \teraut_S(\A_S)$, where $\A_S = \A\otimes_R S$ is the scalars extension of $\A$. After the suitable identifications,
$\affteraut(\A)(R)= \teraut(\A)$. 

Consider the extension by scalars $\A_{R[\varepsilon]} = \A \oplus \varepsilon \A$ of $\A$.
It is well known that a linear map $d$ on $\A$ is a derivation of $\A$ if and only if $\mathrm{Id} + \varepsilon d$ is an $R[\varepsilon]$-automorphism of $\A_{R[\varepsilon]}$. Similarly, in the ternary setting, we have:
$$(d_1, d_2, d_3) \in\terder(\A)\Leftrightarrow (\mathrm{Id} + \varepsilon d_1, \mathrm{Id} + \varepsilon d_2, \mathrm{Id} + \varepsilon d_3) \in \teraut_{R[\varepsilon]}(\A_{R[\varepsilon]}),$$
which means $$\hbox{\bf Lie}(\affteraut(\A))=\terder(\A),$$
where $\terder(\A)$ is the set consisting of all ternary derivations of $\A$. 
In other words, the Lie algebra of the group functor $\affteraut(\A)$ is $\terder(\A)$.

It will be also useful for our purposes the group functor $\affGL_1(\A)$ given by $\affGL_1(\A)(S) = (\A_S)^\times$, that is, the group of invertible elements in $\A_S$. 

Let $\op{\A}$ denote the opposite algebra of $\A$, and consider the group $(\A\times\op{\A})^\times$ with multiplication $(a, b)(a', b') = (aa', b'b)$. It is straightforward to check that the map $(\A\times\op{\A})^\times\to\teraut(\A)$ given by
$(x,y)\mapsto (R_yL_x,L_x,R_y)$ is a group monomorphism.

It was proved in \cite[2.5]{Petersson} that the set of all ternary automorphisms of an alternative algebra has a group structure, called the autotopy group of the algebras. Moreover, the autotopy group turned out to be isomorphic to to the structure group of the algebra \cite[2.7]{Petersson}.  

%

\begin{theorem} \label{Candidothm1}
Let $\A$ be an algebra. A triple $(\sigma_1, \sigma_2, \sigma_3)$, where $\sigma_i \in \GL(\A)$ for all $i$, 
is a ternary automorphism of $\A$ if and only if 
\[
(\sigma_1, \sigma_2, \sigma_3) = (R_yL_x \sigma, L_x \sigma, R_y \sigma),
\] 
for a unique automorphism $\sigma \in \aut(\A)$ and unique invertible elements $x, y \in \A^\times$.
There is a short exact sequence $$1\to(\A\times\op{\A})^\times\buildrel{i}\over{\to} \teraut(\A)\buildrel{p}\over{\to} \aut(\A)\to 1$$
where $i(x,y)=(R_yL_x,L_x,R_y)$ and $p(\sigma_1,\sigma_2,\sigma_3)=\sigma$.

\end{theorem}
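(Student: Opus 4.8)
The plan is to establish the equivalence in two directions and then read off the exact sequence from the resulting normal form.

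\medskip

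The ``if'' direction is a direct substitution. Given $\sigma \in \aut(\A)$ and $x, y \in \A^\times$, put $\sigma_1 = R_yL_x\sigma$, $\sigma_2 = L_x\sigma$, $\sigma_3 = R_y\sigma$; each is invertible as a composite of invertible maps, and for $u,v \in \A$ one computes $\sigma_1(uv) = x\,\sigma(uv)\,y = x\,\sigma(u)\sigma(v)\,y$ while $\sigma_2(u)\sigma_3(v) = \big(x\sigma(u)\big)\big(\sigma(v)y\big)$, which agree by associativity. So $(\sigma_1,\sigma_2,\sigma_3)\in\teraut(\A)$.

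\medskip

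For the converse, start from a ternary automorphism and set $x := \sigma_2(1)$ and $y := \sigma_3(1)$. Evaluating $\sigma_1(uv) = \sigma_2(u)\sigma_3(v)$ at $u=1$ and at $v=1$ gives the operator identities $\sigma_1 = L_x\sigma_3$ and $\sigma_1 = R_y\sigma_2$; since $\sigma_1,\sigma_2,\sigma_3\in\GL(\A)$, both $L_x$ and $R_y$ are invertible \emph{linear operators}. The main obstacle is upgrading this to genuine invertibility of $x,y$ as algebra elements. Here I would argue: surjectivity of $L_x$ yields $a$ with $xa=1$, and since $L_x(ax) = (xa)x = x = L_x(1)$, injectivity of $L_x$ forces $ax = 1$, so $x\in\A^\times$; symmetrically $R_y$ gives $y\in\A^\times$. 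Now define $\sigma := L_{x^{-1}}\sigma_2$, which is invertible. Using that left and right multiplications commute together with the two operator identities, I would verify $\sigma_2 = L_x\sigma$, $\sigma_3 = L_{x^{-1}}R_y\sigma_2 = R_y\sigma$, and $\sigma_1 = R_yL_x\sigma$. That $\sigma$ is an automorphism follows by substituting the normal form into $\sigma_1(uv)=\sigma_2(u)\sigma_3(v)$, namely $x\,\sigma(uv)\,y = x\,\sigma(u)\sigma(v)\,y$, and cancelling the invertible $x$ on the left and $y$ on the right to get $\sigma(uv)=\sigma(u)\sigma(v)$; unitality is $\sigma(1)=x^{-1}\sigma_2(1)=x^{-1}x=1$. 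For uniqueness, if $(R_{y'}L_{x'}\sigma',L_{x'}\sigma',R_{y'}\sigma')$ is the same triple, reading the second and third components at $1$ forces $x'=\sigma_2(1)=x$ and $y'=\sigma_3(1)=y$, after which the second component gives $\sigma'=\sigma$.

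\medskip

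Finally, I would assemble the short exact sequence. Injectivity of $i$ was already recorded before the statement. The map $p(\sigma_1,\sigma_2,\sigma_3)=\sigma$ is well defined precisely by the uniqueness just proved. It is surjective since $(\sigma,\sigma,\sigma)\in\teraut(\A)$ for any $\sigma\in\aut(\A)$ (the case $x=y=1$) and $p(\sigma,\sigma,\sigma)=\sigma$. To see $p$ is a homomorphism, given two ternary automorphisms with normal forms $(x,y,\sigma)$ and $(x',y',\sigma')$, the relation $\sigma L_{x'} = L_{\sigma(x')}\sigma$ shows the second component of the composite equals $L_{x\sigma(x')}\,\sigma\sigma'$, so its automorphism part is $\sigma\sigma'=p(\tau)p(\tau')$. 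Exactness at $\teraut(\A)$ is then immediate from the normal form: $p(\sigma_1,\sigma_2,\sigma_3)=\Id$ holds iff $\sigma=\Id$, i.e.\ iff $(\sigma_1,\sigma_2,\sigma_3)=(R_yL_x,L_x,R_y)=i(x,y)$, giving $\ker p = \Imm i$.
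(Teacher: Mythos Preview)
Your proof is correct and follows essentially the same route as the paper's: extract $x=\sigma_2(1)$, $y=\sigma_3(1)$, deduce the operator identities $\sigma_1=R_y\sigma_2=L_x\sigma_3$, then define $\sigma$ and verify it is an automorphism. Your argument is in fact more complete than the paper's in two respects: you explicitly upgrade the operator invertibility of $L_x,R_y$ to element invertibility of $x,y$ (the paper passes silently from ``$L_x$ and $R_y$ are invertible endomorphisms'' to using $x^{-1},y^{-1}$), and you spell out why $p$ is a group homomorphism and why the sequence is exact, where the paper simply asserts that exactness ``clearly holds.''
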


\begin{proof}
Let $(\sigma_1,\sigma_2, \sigma_3)\in\teraut(\A)$. Write $z = \sigma_1(1)$, $x = \sigma_2(1)$ and
$y = \sigma_3(1)$. Then we have that $xy = z$, and so
\begin{align*}
\sigma_1(a) & = \sigma_2(a)y,
\\
\sigma_1(a) & = x \sigma_3(a),
\end{align*}
for all $a \in \A$. Thus $\sigma_1 = R_y \sigma_2 = L_x \sigma_3$, which implies that $L_x$ and $R_y$ are invertible endomorphisms.
We then have that $\sigma_2 = R_y^{-1}\sigma_1$ and $\sigma_3 = L_x^{-1}\sigma_1$, and
thus $\sigma_1(aa') = \sigma_1(a)y^{-1}x^{-1}\sigma_1(a')$, or equivalently,
$$x^{-1}\sigma_1(aa')y^{-1} = x^{-1}\sigma_1(a)y^{-1}x^{-1}\sigma_1(a')y^{-1},$$
which proves that $\sigma := L_{x^{-1}} R_{y^{-1}}\sigma_1\in\aut(\A)$. Showing uniqueness is straightforward and left it to the reader. 

Conversely, if $\sigma \in\aut(\A)$ and $x, y \in \A^\times$, then
$(R_yL_x \sigma, L_x \sigma, R_y \sigma) \in \teraut(\A)$. Lastly, the exactness of the short exact sequence clearly holds. 
\end{proof}

The following corollary was first proved in \cite[Lemma 1]{Sh1}. We obtain it here as an application of the Lie functor. 

\begin{corollary} \label{Candidothm2}
Any ternary derivation of an algebra $\A$ is of the form $$(d + L_x + R_y, d + L_x, d + R_y),$$ for a unique derivation $d$ of $\A$ and unique elements $x, y \in \A$.
\end{corollary}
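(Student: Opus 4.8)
The plan is to differentiate the split short exact sequence of Theorem \ref{Candidothm1}, that is, to apply the Lie functor to it. The key input is recorded just before that theorem: the Lie algebra of the group functor $\affteraut(\A)$ is $\terder(\A)$, and for any subfunctor of a general linear group functor the Lie algebra is recovered as the $\e$-linear part of its value on $R[\e]$. First I would observe that the sequence
\[
1 \to (\A \times \op{\A})^\times \buildrel{i}\over{\to} \teraut(\A) \buildrel{p}\over{\to} \aut(\A) \to 1
\]
is natural in the base ring, so that scalar extension $S \mapsto \teraut_S(\A_S)$ (and the analogous extensions of the two outer terms) promotes it to a short exact sequence of group functors. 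This sequence is split: the map $\sigma \mapsto (\sigma, \sigma, \sigma)$ is a section of $p$, since an automorphism fixes the unit, so in the notation of Theorem \ref{Candidothm1} one has $x = \sigma(1) = 1$ and $y = \sigma(1) = 1$, whence $p(\sigma, \sigma, \sigma) = \sigma$.

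Applying the Lie functor then yields a split short exact sequence of $R$-modules
\[
0 \to \A \times \op{\A} \buildrel{di}\over{\to} \terder(\A) \buildrel{dp}\over{\to} \der(\A) \to 0,
\]
and the remaining work is to make $di$ and the section explicit, which is the routine part. Substituting group elements of the form $1 + \e(\cdot)$ and using $\e^2 = 0$ gives $L_{1 + \e x} = \Id + \e L_x$ and $R_{1 + \e y} = \Id + \e R_y$, so that $i(1 + \e x, 1 + \e y) = (R_{1+\e y}L_{1 + \e x}, L_{1 + \e x}, R_{1 + \e y})$ has $\e$-linear part $(L_x + R_y, L_x, R_y)$; hence $di(x, y) = (L_x + R_y, L_x, R_y)$. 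Likewise the section $\sigma \mapsto (\sigma, \sigma, \sigma)$ differentiates to $d \mapsto (d, d, d)$, with $d$ ranging over $\der(\A) = \lie(\affaut(\A))$.

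Because the sequence splits, every ternary derivation decomposes uniquely as $di(x, y) + (d, d, d)$, giving
\[
(d_1, d_2, d_3) = (L_x + R_y, L_x, R_y) + (d, d, d) = (d + L_x + R_y, d + L_x, d + R_y),
\]
as claimed. Uniqueness is immediate: $di$ is injective since $L_x = R_y = 0$ forces $x = L_x(1) = 0$ and $y = R_y(1) = 0$, while the splitting pins down $d$. The step I expect to require the most care is the justification that the Lie functor carries this particular short exact sequence of group functors to an exact sequence of Lie algebras, together with the identification of $\lie\big((\A \times \op{\A})^\times\big)$ with $\A \times \op{\A}$ (the opposite multiplication affecting only the bracket, not the underlying module). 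One can sidestep this subtlety entirely by arguing directly from Theorem \ref{Candidothm1}: given $(d_1, d_2, d_3) \in \terder(\A)$, the triple $(\Id + \e d_1, \Id + \e d_2, \Id + \e d_3)$ lies in $\teraut_{R[\e]}(\A_{R[\e]})$, so applying Theorem \ref{Candidothm1} over $R[\e]$ produces $\sigma \in \aut(\A_{R[\e]})$ and invertible $X, Y$; reducing modulo $\e$ and evaluating at $1$ forces $\sigma \equiv \Id$, $X \equiv 1$ and $Y \equiv 1$, so $\sigma = \Id + \e d$ with $d \in \der(\A)$, $X = 1 + \e x$ and $Y = 1 + \e y$, and reading off the $\e$-components recovers the stated form with uniqueness inherited from that of Theorem \ref{Candidothm1}.
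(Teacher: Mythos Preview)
Your proposal is correct and follows essentially the same strategy as the paper: both differentiate the group-level classification of Theorem \ref{Candidothm1} via the Lie functor, the only cosmetic difference being that the paper packages the splitting as an explicit group isomorphism $\aut(\A)\times \A^\times\times \A^\times \cong \teraut(\A)$ (a semidirect product) before applying $\lie$, whereas you phrase it as a split short exact sequence. Your closing direct argument over $R[\e]$ is a clean alternative that the paper does not spell out.
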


\begin{proof}
Theorem \ref{Candidothm1} tells us that the map
$\varphi: \aut(\A)\times \A^\times\times \A^\times\to \teraut(\A)$, given by $\varphi(\sigma, x, y) =(R_yL_x \sigma, L_x \sigma, R_y \sigma)$ defines an isomorphism of $R$-modules. On the other hand, one can easily check that
$\aut(\A)\times \A^\times\times \A^\times$ becomes a group under the operation:
\[
(\sigma, x, y)(\tau, z, t) = \big(\sigma \tau, x \sigma(z), \sigma(t) y\big).
\]
Notice that the identity element is the triple $(\mathrm{Id}_\A, 1, 1)$ and that
\[
(\sigma, x, y)^{-1} = \big(\sigma^{-1}, \sigma^{-1}(x^{-1}), \sigma^{-1}(y^{-1})\big).
\]
Therefore, $\varphi$ is indeed a group isomorphism. This allows us to construct a group
functor isomorphism
$\affaut(\A)\times\affGL\nolimits_1(\A)\times\affGL\nolimits_1(\A)\cong\affteraut(\A),$
(here $\affaut(\A)$ refers to the group functor $S \mapsto \aut_S(\A_S)$).
Applying the Lie functor we obtain an $R$-algebra isomorphism:
\[
\psi: \der(\A)\times \A\times \A\to \terder(\A),
\]
defined by $\psi(d, x, y) = (d + L_x + R_y, d + L_x, d + R_y)$, for any $d \in \der(\A)$ and $x, y \in \A$. This finishes the proof.
\end{proof}

Automorphisms of triangular algebras were studied by Cheung \cite[Chapter 5]{Ch1}. For an algebra $\A$, we write $\inn(a)$ to denote the inner automorphism associated to the element $a \in \A$.

\begin{definition} \label{innerautdef}
A ternary automorphism of an algebra $\A$ is said to be \textbf{inner} if it is
of the form $\big(R_yL_x\inn(a), \, L_x\inn(a), \, R_y\inn(a)\big)$, where $x, y , a \in \A^\times$. The group of all inner ternary automorphisms of $\A$ will be denoted by $\innteraut(\A)$.

Notice that if the components of the inner ternary automorphism are equal, then $R_yL_x = L_x = R_y$, which implies that $L_x = R_y = \mathrm{Id}_\A$. So, the inner ternary automorphism becomes $(\inn(a),\inn(a),\inn(a)),$ as expected.

\end{definition}

%
%

\begin{definition} \cite[Definition 5.1.6]{Ch1}
Let $\T = \Tri(A, M, B)$ be a triangular algebra. An automorphism $\alpha$ of $\T$ is called
{\bf partible} if it can be written as $\alpha = \phi_z \bar \alpha$, where $\phi_z$ is the inner automorphism associated to $z \in \T^\times$, and $\bar \alpha$ is an automorphism of $\T$ satisfying that $\bar \alpha(A) = A$, $\bar \alpha(M) = M$ and $\bar \alpha(B) = B$.
\end{definition} 

It is straightforward to check that every automorphism $\sigma$ of a triangular algebra $\T = \Tri(A, M, B)$ satisfying
\begin{equation} \label{aut1}
\sigma(A) = A, \quad \sigma(B) = B \, \, \mbox{ and } \, \, \sigma(M) = M
\end{equation}
can be written as
\begin{equation} \label{aut2}
\sigma
\left(
\begin{array}{@{}cc@{}}
a & m  \\
  & b
\end{array}
\right) = \left(
\begin{array}{@{}cc@{}}
f_\sigma(a) & \nu_\sigma(m)  \\
  & g_\sigma(b)
\end{array}
\right),
\end{equation}
where $f_\sigma$, $g_\sigma$ are automorphisms of $A$ and $B$, respectively, and $\nu_\sigma: M \to M$ is a linear bijective map such that $\nu_\sigma(am) = f_\sigma(a)\nu_\sigma(m)$,
$\nu_\sigma(mb) = \nu_\sigma(m)g_\sigma(b)$, for all $a \in A$, $b \in B$, $m \in M$. This class of automorphisms has turned out to be very useful when studying $\sigma$-maps of triangular algebras; see, for instance, \cite{Bk3, MGRSO}.



We next determine the necessary and sufficient conditions to be imposed on a ternary automorphism $(\sigma_1, \sigma_2, \sigma_3)$ of $\T$ such that the associated automorphism $\sigma$ (see Theorem \ref{Candidothm1}), satisfies \eqref{aut1}, and to describe such ternary automorphisms. 

\begin{theorem}
Let $\T = \Tri(A, M, B)$ be a triangular algebra and $(\sigma_1, \sigma_2, \sigma_3) = (R_yL_x \sigma, L_x \sigma, R_y \sigma)$ a ternary automorphism of $\T$, where $x, y \in \T^\times$. Then $\sigma$ satisfies \eqref{aut1} if and only if $\sigma_2(A) = A$, $\sigma_1(M) = M$ and $\sigma_3(B) = B$. In such a case, writing $\sigma$ as in \eqref{aut2}, and
assuming
$x = \left(
\begin{array}{@{}cc@{}}
a_x & m_x  \\
  & b_x
\end{array}
\right)$, $y = \left(
\begin{array}{@{}cc@{}}
a_y & m_y  \\
  & b_y
\end{array}
\right)$ (where $a_x, a_y \in A^\times$ and $b_x, b_y \in B^\times$) we have that
\begin{align*}
\sigma_1
\left(
\begin{array}{@{}cc@{}}
a & m  \\
  & b
\end{array}
\right) & =
\left(
\begin{array}{@{}cc@{}}
R_{a_y}L_{a_x}f_\sigma(a) & a_x f_\sigma(a) m_y + a_x\nu_\sigma(m)b_y + m_x g_\sigma(b)b_y \\
  & R_{b_y}L_{b_x}g_\sigma(b)
\end{array}
\right),
\\
\sigma_2
\left(
\begin{array}{@{}cc@{}}
a & m  \\
  & b
\end{array}
\right) & =
\left(
\begin{array}{@{}cc@{}}
L_{a_x}f_\sigma(a) & a_x\nu_\sigma(m) + m_x g_\sigma(b) \\
  & L_{b_x}g_\sigma(b)
\end{array}
\right),
\\
\sigma_3
\left(
\begin{array}{@{}cc@{}}
a & m  \\
  & b
\end{array}
\right) & =
\left(
\begin{array}{@{}cc@{}}
R_{a_y}f_\sigma(a) & f_\sigma(a)m_y + \nu_\sigma(m)b_y  \\
  & R_{b_y}g_\sigma(b)
\end{array}
\right),
\end{align*}
for all $\left(
\begin{array}{@{}cc@{}}
a & m  \\
  & b
\end{array}
\right) \in \T$. Moreover, $(R_{a_y}L_{a_x}f_\sigma, L_{a_x}f_\sigma, R_{a_y}f_\sigma)$ is a ternary automorphism of $A$ and 
$(R_{b_y}L_{b_x}g_\sigma, L_{b_x}g_\sigma, R_{b_y}g_\sigma)$ is a ternary automorphism of $B$.
\end{theorem}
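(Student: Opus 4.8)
I begin by isolating what is genuinely at stake. The three displayed formulas and the closing assertion about $A$ and $B$ are essentially bookkeeping once the equivalence is in hand, so the core of the argument is the stated equivalence, and within it the reverse implication. Throughout I will use the standard fact that $t = \begin{pmatrix} a & m \\ & b\end{pmatrix} \in \T$ is invertible if and only if $a \in A^\times$ and $b \in B^\times$, which follows by applying the canonical projection $\T \to A \times B$ with kernel $M$; this guarantees that $x,y$ have the asserted triangular form with $a_x,a_y \in A^\times$, $b_x,b_y \in B^\times$, and that $x^{-1},y^{-1}$ are again upper triangular with the inverse entries on the diagonal. I will also use the identifications $A = p\T p$, $B = q\T q$, $M = p\T q$, together with the observations that $p\T p$ is a left ideal of $\T$, that $q\T q$ is a right ideal, and that $p\T q$ is a (square-zero) two-sided ideal.

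For the implication \eqref{aut1} $\Rightarrow$ (the conditions on the $\sigma_i$), I would first invoke \eqref{aut2} to write $\sigma$ in diagonal form with data $f_\sigma, g_\sigma, \nu_\sigma$, and then expand $\sigma_2 = L_x\sigma$, $\sigma_3 = R_y\sigma$ and $\sigma_1 = R_yL_x\sigma$ by triangular matrix multiplication. This single computation produces at once the three displayed formulas. Reading them on the Peirce summands then gives the conditions: $\sigma_2$ leaves the off-diagonal zero on $A$ and acts by $L_{a_x}f_\sigma$ on the corner, a bijection of $A$ (the automorphism $f_\sigma$ followed by the bijection $L_{a_x}$, as $a_x \in A^\times$), so $\sigma_2(A)=A$; likewise $\sigma_3$ acts on $B$ by the bijection $R_{b_y}g_\sigma$, giving $\sigma_3(B)=B$; and $\sigma_1$ sends $\begin{pmatrix} 0 & m \\ & 0\end{pmatrix}$ to $\begin{pmatrix} 0 & a_x\nu_\sigma(m)b_y \\ & 0\end{pmatrix}$, with $m \mapsto a_x\nu_\sigma(m)b_y$ a bijection of $M$, so $\sigma_1(M)=M$. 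Note that the pairing in the statement is forced: $\sigma_1(A)$ acquires the nonzero $M$-component $a_x f_\sigma(a) m_y$, so it is precisely the factor with no right multiplication, namely $\sigma_2$, that stabilises the left ideal $A$ (and dually for $\sigma_3$ and $B$).

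The reverse implication is where the real work lies. Assuming $\sigma_2(A)=A$, $\sigma_1(M)=M$, $\sigma_3(B)=B$, I would use invertibility of $L_x$ and $R_y$ to turn these into statements about $\sigma$ alone: from $\sigma_2 = L_x\sigma$ one gets $x\,\sigma(A)=A$, hence $\sigma(A)=x^{-1}A$; from $\sigma_3 = R_y\sigma$ one gets $\sigma(B)=By^{-1}$; and from $\sigma_1 = R_yL_x\sigma$ one gets $\sigma(M)=x^{-1}My^{-1}$. It then remains only to check that the right-hand sides recover $A$, $B$ and $M$, and here the ideal structure does the job: since $A=p\T p$ is a left ideal and $x$ is invertible, $x^{-1}A=A$; since $B=q\T q$ is a right ideal, $By^{-1}=B$; and since $M=p\T q$ is a two-sided ideal, $x^{-1}My^{-1}=M$ (the inclusion $\subseteq$ is immediate, and equality follows from $xMy\subseteq M$ as well). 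One can equally confirm these identities from the explicit triangular form of $x^{-1},y^{-1}$, e.g.\ $x^{-1}\begin{pmatrix} 0 & m \\ & 0\end{pmatrix}y^{-1}=\begin{pmatrix} 0 & a_x^{-1}mb_y^{-1} \\ & 0\end{pmatrix}$. This yields $\sigma(A)=A$, $\sigma(B)=B$, $\sigma(M)=M$, which is \eqref{aut1}.

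Finally, for the closing assertion, once \eqref{aut1} and \eqref{aut2} are in force we have $f_\sigma \in \aut(A)$ with $a_x,a_y \in A^\times$, so the converse direction of Theorem \ref{Candidothm1}, applied to the algebra $A$, shows that $(R_{a_y}L_{a_x}f_\sigma,\,L_{a_x}f_\sigma,\,R_{a_y}f_\sigma)$ is a ternary automorphism of $A$; the identical argument with $g_\sigma\in\aut(B)$ and $b_x,b_y\in B^\times$ handles the triple for $B$. The main obstacle is the reverse implication, and specifically the clean translation of invariance of the Peirce components under the $\sigma_i$ into invariance under $\sigma$; this hinges entirely on recognising $A$, $B$ and $M$ as, respectively, a left ideal, a right ideal and a two-sided ideal of $\T$, each stable under multiplication by the triangular invertibles $x,y$ on the appropriate side.
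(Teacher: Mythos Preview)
Your argument is correct and follows essentially the same route as the paper. The paper handles both directions of the equivalence in one stroke by noting $\sigma_2(A)=x\sigma(A)$ and computing $xA=A$ (and similarly $a_xMb_y=M$, $Bb_y=B$) from the explicit triangular form of $x,y$, whereas you separate the two directions and phrase the key fact $x^{-1}A=A$, $By^{-1}=B$, $x^{-1}My^{-1}=M$ via the left-, right- and two-sided ideal structure of $A$, $B$, $M$; this is a pleasant conceptual gloss but is the same observation underneath, and your derivation of the explicit formulas and the closing ternary automorphisms of $A$ and $B$ via Theorem~\ref{Candidothm1} matches what the paper leaves implicit.
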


\begin{proof}
Let $x = \left(
\begin{array}{@{}cc@{}}
a_x & m_x  \\
 & b_x
\end{array}
\right)$ and $y = \left(
\begin{array}{@{}cc@{}}
a_y & m_y  \\
 & b_y
\end{array}
\right)$ where $a_x, a_y \in A^\times$, $m_x, m_y \in M$ and $b_x, b_y \in B^\times$. Notice that $a_x A = A$, $a_x M b_y = M$ and $B b_y = B$. From this, we obtain that $\sigma(A) = A$ if and only if
\begin{align*}
\sigma_2(A) =
\left(
\begin{array}{@{}cc@{}}
a_x & m_x  \\
  & b_x
\end{array}
\right)
\sigma(A) = \left(
\begin{array}{@{}cc@{}}
a_x A & 0  \\
  & 0
\end{array}
\right) = A.
\end{align*}
Similarly, we can see that $\sigma(M) = M$ if and only if $\sigma_1(M) = M$ and $\sigma(B) = B$ if and only if $\sigma_3(B) = B$. In fact, the condition that $\sigma_1(M) = M$ is equivalent to $\sigma_2(M) = M$, or, $\sigma_3(M) = M$.
\end{proof}

It was proven in \cite[Proposition 4.2]{MGRSO} that under certain mild hypothesis on a triangular algebra $\T$ (for instance, $A$ and $B$ are nondegenerate) every automorphism of $\T$ is $M$-preserving. As a consequence:

\begin{corollary}
Let $\T = \Tri(A, M, B)$ be a triangular algebra with $A$ and $B$ nondegenerate. Then: 
\begin{itemize}
\item[\rm (i)] Any ternary automorphism
$(\sigma_1, \sigma_2, \sigma_3)$ of $\T$ verifies $\sigma_1(M)= \sigma_2(M) = \sigma_3(M) = M$.
\item[\rm (ii)] Any ternary derivation $(d_1, d_2, d_3)$ of $\T$ satisfies $d_i(M)\subset M$ for $i=1,2,3$.
\end{itemize}
\end{corollary}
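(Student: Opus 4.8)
The plan is to derive both claims from the structural description already in hand, together with the cited fact \cite[Proposition 4.2]{MGRSO} that nondegeneracy of $A$ and $B$ forces every automorphism of $\T$ to be $M$-preserving. For (i), I would first apply Theorem \ref{Candidothm1} to write $(\sigma_1,\sigma_2,\sigma_3)=(R_yL_x\sigma,L_x\sigma,R_y\sigma)$ with $\sigma\in\aut(\T)$ and $x,y\in\T^\times$. The nondegeneracy hypothesis then yields $\sigma(M)=M$ through the cited proposition, so the remaining work is to transfer this equality to the three components. Denoting by $a_x,a_y\in A^\times$ and $b_x,b_y\in B^\times$ the diagonal entries of $x$ and $y$, one has $a_xM=M$ and $Mb_y=M$, and a direct computation shows that $L_x$ restricts to $m\mapsto a_xm$ and $R_y$ to $m\mapsto mb_y$ on $M$; hence both are bijections of $M$ onto itself. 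Consequently $\sigma_1=R_yL_x\sigma$, $\sigma_2=L_x\sigma$ and $\sigma_3=R_y\sigma$ carry $M$ onto $M$ precisely when $\sigma$ does, which is exactly the chain $\sigma(M)=M\Leftrightarrow\sigma_1(M)=M\Leftrightarrow\sigma_2(M)=M\Leftrightarrow\sigma_3(M)=M$ already recorded in the preceding theorem. Combined with $\sigma(M)=M$, this gives $\sigma_1(M)=\sigma_2(M)=\sigma_3(M)=M$.

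For (ii) I would invoke Corollary \ref{Candidothm2} to write $(d_1,d_2,d_3)=(d+L_x+R_y,\,d+L_x,\,d+R_y)$ for a derivation $d$ of $\T$ and elements $x,y\in\T$. Each $d_i(M)$ is then a sum of terms of the form $d(M)$, $L_x(M)$ and $R_y(M)$, so it suffices to show that each of these is contained in $M$. The inclusions $L_x(M)\subseteq M$ and $R_y(M)\subseteq M$ hold for arbitrary $x,y\in\T$ and follow at once from triangularity: identifying $M$ with $p\T q$ and using $q\T p=0$ one has $xp=pxp$ and $qy=qyq$, so left multiplication by $x$ and right multiplication by $y$ keep the corner $p\T q$ inside itself; no invertibility is needed here.

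The remaining, and only substantial, point is $d(M)\subseteq M$. I would prove this straight from the Peirce decomposition. Setting $w=d(p)$ and applying $d$ to $p^2=p$ gives $w=wp+pw$; decomposing $w$ relative to $p$ and discarding its $q\T p$-component, which vanishes, this relation forces the $p\T p$- and $q\T q$-components of $w$ to coincide, hence to be zero, so that $d(p)=w\in p\T q=M$. As $d(q)=-d(p)\in M$ as well, for $m\in M=p\T q$ the identities $m=pm=mq$ yield $d(m)=d(p)\,m+p\,d(m)$ and $d(m)=d(m)\,q+m\,d(q)$; the cross terms $d(p)\,m$ and $m\,d(q)$ lie in $M\cdot M=(p\T q)(p\T q)=0$ since $qp=0$, whence $d(m)=p\,d(m)=d(m)\,q$ and therefore $d(m)\in p\T\cap\T q=p\T q=M$. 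This establishes $d(M)\subseteq M$ and finishes (ii).

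The step I expect to be the main obstacle is precisely this last inclusion. The tempting route, faithful to the functorial spirit of the paper, is to regard a ternary derivation as an element of $\lie(\affteraut(\T))=\terder(\T)$ and to read off $d_i(M)\subseteq M$ by applying part (i) over the dual numbers $R[\e]$. This is delicate, because nondegeneracy of $A$ and $B$ need not survive the extension: in $A_{R[\e]}=A\oplus\e A$ every element $\e a$ is an absolute zero divisor, so the hypotheses of the cited proposition may fail over $R[\e]$. The direct Peirce computation sidesteps this entirely---indeed it shows that $d(M)\subseteq M$ for every derivation of a triangular algebra, with no nondegeneracy assumption---so that is the argument I would present for (ii).
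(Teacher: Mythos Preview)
Your argument is correct and, for part (i), matches the paper's proof essentially verbatim: invoke Theorem~\ref{Candidothm1}, use \cite[Proposition~4.2]{MGRSO} to get $\sigma(M)=M$, and then note that left/right multiplication by invertible elements of $\T$ sends $M$ onto $M$.

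For part (ii) the paper's proof is the single line ``Use now Corollary~\ref{Candidothm2}'', leaving the verification that $d(M)\subseteq M$, $L_x(M)\subseteq M$, $R_y(M)\subseteq M$ implicit. You supply exactly these details, and your self-contained Peirce computation for $d(M)\subseteq M$ is clean and correct. Two remarks are worth making. First, your warning about the functorial shortcut is well taken: applying (i) over $R[\varepsilon]$ is the approach most in keeping with the paper's philosophy, but nondegeneracy of $A$ and $B$ genuinely can fail after base change to $R[\varepsilon]$, so that route is not available without further work; the paper does not comment on this. Second, your Peirce argument in fact shows $d(M)\subseteq M$ for \emph{every} derivation of \emph{any} triangular algebra, with no nondegeneracy hypothesis; the paper arrives at this same observation only later (as a corollary of Proposition~\ref{thames}), by way of the Forrest--Marcoux description \cite[Proposition~2.2]{FM}. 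So your treatment of (ii) is slightly more direct and makes the independence from the nondegeneracy assumption transparent from the outset.
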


\begin{proof}
(i) By Theorem \ref{Candidothm1} there exist a unique automorphism $\sigma$ of $\T$ and unique invertible elements $x, y \in \T$ such that $(\sigma_1, \sigma_2, \sigma_3) = (R_yL_x\sigma, L_x\sigma, R_y\sigma)$. Now, \cite[Proposition 4.2]{MGRSO} applies to get that $\sigma(M) = M$. Then $\sigma_1(M) = xMy = M$, since $x$ and $y$ are invertible. Similarly, $\sigma_2(M) = \sigma_3(M) = M$.

\smallskip

(ii) Use now Corollary \ref{Candidothm2}.
\end{proof}

\section{Ternary derivations of triangular algebras}

In this section, we provide a precise description of ternary derivations of triangular algebras. We will also characterize when a given ternary derivation is inner. 

\begin{theorem} \label{terderchar}
Let $\T = \Tri(A, M, B)$ be a triangular algebra and $(d_1,d_2,d_3)$ be a triple of linear maps on $\T$. Then, $(d_1, d_2, d_3)$ is a ternary derivation of $\T$ if and only if it is of the following form:
\allowdisplaybreaks
\begin{align*}
d_1
\left(
\begin{array}{@{}cc@{}}
a & m  \\
  & b
\end{array}
\right) & =
\left(
\begin{array}{@{}cc@{}}
\delta_{1}(a) & an_1 {+} \tau_{1}(m) {+} n'_1 b
 \\ [1.2mm]
  & \mu_{1}(b)
\end{array}
\right),
\\ \label{td} \tag{\sc td}
d_2
\left(
\begin{array}{@{}cc@{}}
a & m  \\
  & b
\end{array}
\right) & =
\left(
\begin{array}{@{}cc@{}}
\delta_{2}(a) & an_2 {+} \tau_{2}(m) {+} n'_1 b
 \\ [1.2mm]
  & \mu_{2}(b)
\end{array}
\right),
\\
d_3
\left(
\begin{array}{@{}cc@{}}
a & m  \\
  & b
\end{array}
\right) & =
\left(
\begin{array}{@{}cc@{}}
\delta_{3}(a) & an_1 {+} \tau_{3}(m) {-} n_2 b
 \\ [1.2mm]
  & \mu_{3}(b)
\end{array}
\right),
\end{align*}
where $n_1, n'_1, n_2 \in M$ and $\delta_{i}: A \to A$, $\tau_{i}: M \to M$, $\mu_{i}: B \to B$, for $i = 1, 2, 3$,
are linear maps such that
\begin{itemize}
\item[{\rm (i)}] $(\delta_1, \delta_2, \delta_3)$ is a ternary derivation of $A$,
\item[{\rm (ii)}] $(\mu_1, \mu_2, \mu_3)$ is a ternary derivation of $B$,
\item[{\rm (iii)}] $\tau_1(am) = \delta_2(a)m + a \tau_3(m)$, for all $a \in A$ and $m \in M$,
\item[{\rm (iv)}] $\tau_1(mb) = m\mu_3(b) + \tau_2(m)b$, for all $b \in B$ and $m \in M$.
\end{itemize}
\end{theorem}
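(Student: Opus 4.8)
The plan is to prove both implications by testing the defining identity $d_1(uv) = d_2(u)v + u\,d_3(v)$ against the Peirce decomposition $\T = A \oplus M \oplus B$ attached to $p$ (identifying $p\T p \cong A$, $p\T q \cong M$, $q\T q \cong B$), keeping in mind the multiplication rules $A\cdot A \subseteq A$, $A\cdot M \subseteq M$, $M\cdot B \subseteq M$, $B\cdot B \subseteq B$, while every other product among the three summands vanishes. The sufficiency direction is then a routine verification: assuming the displayed forms together with (i)--(iv), one substitutes general elements $u = \begin{pmatrix} a & m \\ & b\end{pmatrix}$ and $v = \begin{pmatrix} a' & m' \\ & b'\end{pmatrix}$ and compares the three block entries. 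The $A$- and $B$-blocks reduce exactly to (i) and (ii), while the $M$-block splits into pieces matched by (iii), (iv), together with one decisive cancellation: the terms $+\,a n_2 b'$ coming from $d_2(u)v$ and $-\,a n_2 b'$ coming from $u\,d_3(v)$ annihilate each other, which is precisely why the coefficients $n_2$ and $-n_2$ are sign-linked and why $n_1$, $n'_1$ are shared across components.

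For the necessity direction I would first establish that each $d_i$ respects the coarse grading, namely $d_i(A) \subseteq A \oplus M$, $d_i(M) \subseteq M$ and $d_i(B) \subseteq M \oplus B$. Writing each $d_i$ as a sum of its nine Peirce component maps $X \to Y$ with $X, Y \in \{A, M, B\}$, I would feed the identity the pairs whose product is zero. From $ab = 0$ (with $a\in A$, $b\in B$) the $B$-component forces the $A\to B$ part of $d_2$ to vanish and the $A$-component forces the $B\to A$ part of $d_3$ to vanish; symmetrically $ba = 0$ kills the $B\to A$ part of $d_2$ and the $A\to B$ part of $d_3$. Taking $a' = 1_A$ in $d_1(aa') = d_2(a)a' + a\,d_3(a')$ then shows $d_1(A)$ has no $B$-component, and the analogous product $bb'$ controls $d_1(B)$. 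The remaining, most delicate, vanishings --- those of the $M\to A$ and $M\to B$ parts of all three $d_i$ --- combine two inputs: the zero products $ma=0$ and $bm=0$ dispose of the $M\to A$ part of $d_2$ and the $M\to B$ part of $d_3$, while the unital relations $pm = m$ and $mq = m$, fed into the identity, express $d_1(m)$ in two different ways whose comparison kills the $M\to A$ and $M\to B$ parts of $d_1$ and forces the corresponding parts of $d_3$ and $d_2$ to vanish as well. This structural step carries the bulk of the bookkeeping and is the main obstacle.

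Once the grading is secured, each $d_i$ is determined by diagonal maps $\delta_i\colon A\to A$, $\tau_i\colon M\to M$, $\mu_i\colon B\to B$ together with off-diagonal parts $A\to M$ and $B\to M$, and I would read these off from the four nonzero-product pairs. The pair $aa'$ yields (i) on the $A$-block and, on the $M$-block, the relation $\beta_1^A(aa') = a\,\beta_3^A(a')$; setting $a'=1_A$ and $a=1_A$ in turn forces the $A\to M$ parts of $d_1$ and $d_3$ to coincide and to be the left multiplication $a \mapsto a n_1$, where $n_1$ is the $M$-component of $d_3(p)$. Dually, $bb'$ gives (ii) and the shared $B\to M$ part $b\mapsto n'_1 b$ of $d_1$ and $d_2$, with $n'_1$ the $M$-component of $d_2(q)$; the pairs $am$ and $mb$ give exactly (iii) and (iv). Finally, the zero product $ab = 0$, read now on its $M$-component, couples the last two off-diagonal parts: it forces the $A\to M$ part of $d_2$ to be $a\mapsto a n_2$ and the $B\to M$ part of $d_3$ to be $b\mapsto -n_2 b$, where $n_2$ is the $M$-component of $d_2(p)$. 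Collecting these identifications reproduces the three displayed matrix forms, and since (i)--(iv) are exactly what the nonzero-product pairs delivered, the proof is complete.
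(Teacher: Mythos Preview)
Your argument is correct, but it takes a genuinely different route from the paper's own proof. The paper does not analyse the Peirce components of $d_1(uv)=d_2(u)v+u\,d_3(v)$ directly. Instead, it invokes Corollary~\ref{Candidothm2} (obtained via the Lie functor from the ternary automorphism description) to write
\[
(d_1,d_2,d_3)=(d+L_x+R_y,\ d+L_x,\ d+R_y)
\]
for a unique derivation $d$ of $\T$ and unique $x,y\in\T$, then quotes the known structure of derivations of triangular algebras \cite[Proposition~2.2]{FM} to express $d$ in the standard form~\eqref{derivation}. Adding back $L_x$ and $R_y$ immediately produces the displayed matrices, with $\delta_i$, $\mu_i$, $\tau_i$ appearing as $(\delta+L_{a_x}+R_{a_y},\ldots)$ etc., and $n_1=n+m_y$, $n'_1=-n+m_x$, $n_2=n$; conditions (i)--(iv) then fall out of Corollary~\ref{Candidothm2} applied to $A$ and $B$ and of the compatibility conditions already known for $\tau$.

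What each approach buys: the paper's proof is short and conceptual, but it leans on the entire functorial apparatus of Section~3 and on an external reference for the derivation structure of $\T$. Your proof is self-contained and elementary---it never uses Corollary~\ref{Candidothm2} or the Lie functor---at the cost of the bookkeeping you flagged (especially the two-step vanishing of the $M\to A$ and $M\to B$ parts via the comparison of $d_1(m)=d_2(p)m+p\,d_3(m)$ with $d_1(m)=d_2(m)q+m\,d_3(q)$). Both are valid; yours would survive in a setting where the ternary-automorphism machinery is unavailable.
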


\begin{proof}
Let $(d_1, d_2, d_3)$ be a ternary derivation of $\T$. By Corollary \ref{Candidothm2} there exist a unique derivation $d$ of $\T$ and elements $x, y \in \T$ such that 
\[
(d_1, d_2, d_3) = (d + L_x + R_y, \, d + L_x, \, d + R_y).
\]
Write $x = \left(
\begin{array}{@{}cc@{}}
a_x & m_x  \\
  & b_x
\end{array}
\right)$ and $y = \left(
\begin{array}{@{}cc@{}}
a_y & m_y  \\
  & b_y
\end{array}
\right)$. 
An application of \cite[Proposition 2.2]{FM} tells us that 
\begin{equation} \label{derivation}
d
\left(
\begin{array}{@{}cc@{}}
a & m  \\
  & b
\end{array}
\right)  =
\left(
\begin{array}{@{}cc@{}}
\delta(a) & an {+} \tau(m) {-} n b
 \\ [1.2mm]
  & \mu(b)
\end{array}
\right),
\end{equation}
where $\delta$ and $\mu$ are derivations of $A$ and $B$, respectively, $n \in M$ and $\tau: M \to M$ is a linear map
satisfying that $\tau(am) = \delta(a)m + a \tau(m)$, $\tau(mb) = m \mu(b) + \tau(m)b$, for all $a\in A$, $m \in M$ and $b \in B$. We then have that 
\allowdisplaybreaks
\begin{align*}
d_1
\left(
\begin{array}{@{}cc@{}}
a & m  \\
  & b
\end{array}
\right)  & {=} \left(
\begin{array}{@{}cc@{}}
\Big(\delta {+} L_{a_x} {+} R_{a_y}\Big)(a) & a(n + m_y) {+} \Big(\tau + L_{a_x} {+} R_{b_y}\Big)(m) {+} (-n + m_x)b
 \\ [1.2mm]
  & \Big(\mu {+} L_{b_x} {+} R_{b_y}\Big)(b)
\end{array}
\right),
\\
d_2
\left(
\begin{array}{@{}cc@{}}
a & m  \\
  & b
\end{array}
\right)& {=} \left(
\begin{array}{@{}cc@{}}
\Big(\delta {+} L_{a_x}\Big)(a) & an {+} \Big(\tau + L_{a_x} \Big)(m) + (-n {+} m_x)b
 \\ [1.2mm]
  & \Big(\mu {+} L_{b_x}\Big)(b)
\end{array}
\right),
\\
d_3
\left(
\begin{array}{@{}cc@{}}
a & m  \\
  & b
\end{array}
\right)  & {=} \left(
\begin{array}{@{}cc@{}}
\Big(\delta + R_{a_y}\Big)(a) & a(n + m_y) {+} \Big(\tau + R_{b_y}\Big)(m) -nb
 \\ [1.2mm]
  & \Big(\mu + R_{b_y}\Big)(b)
\end{array}
\right).
\end{align*}
Corollary \ref{Candidothm2} yields that $(\delta_1, \delta_2, \delta_3) = \Big(\delta + L_{a_x} + R_{a_y}, \delta + L_{a_x}, \delta + R_{a_y}\Big)$ and 
$(\mu_1, \mu_2, \mu_3) = \Big(\mu + L_{b_x} + R_{b_y}, \mu + L_{b_x}, \mu + R_{b_y}\Big)$ are ternary derivations of $A$ and $B$, respectively. This proves (i) and (ii). 

Moreover, it is straightforward to check that the triple of linear maps
\[
(\tau_1, \tau_2, \tau_3) = \Big(\tau + L_{a_x} + R_{b_y}, \tau + L_{a_x}, \tau + R_{b_y}\Big)
\]
satisfies (iii) and (iv). To finish, let $n_1 = n + m_y$, $n_1' = -n + m_x$ and $n_2 = n$. 

The converse trivially holds. 
\end{proof}


\subsection{Inner ternary derivations}
Let $\A$ be an $R$-algebra and $d$ be a derivation of $\A$. Recall that a derivation of $\A$ is said to be {\bf inner} if $d = R_a - L_a$ for some $a \in \A$. At this point, a natural question arises: ``what should an inner ternary derivation be?'' The answer to this question will come from the Lie algebra of the group functor $\affinnteraut(\A)$ of inner ternary automorphisms, defined by 
\[
\affinnteraut(\A)(S):=\innteraut(\A_S),
\] 
for any $\alg_R$. Notice that $\affinnteraut(\A)(R) = \innteraut(\A)$. 

 Then, a triple $(d_1, d_2, d_3)$ of linear maps of $\A$ is an inner ternary derivation of $\A$ if and only if 
$(\mathrm{Id} + \varepsilon d_1, \mathrm{Id} + \varepsilon d_2, \mathrm{Id} + \varepsilon d_3)$ is an inner automorphism of $\A_{R[\varepsilon]}$. So, imposing the conditions of Definition \ref{innerautdef} to $(\mathrm{Id} + \varepsilon d_1, \mathrm{Id} + \varepsilon d_2, \mathrm{Id} + \varepsilon d_3)$, after a long easy but routinely calculation we get that 
$$d_1 = L_a + R_b,  \quad d_2 = L_a + R_c, \quad d_3 = -L_c + R_b,$$
for some $a, b, c \in \A$.
This motivates the following definition: 

\begin{definition} \cite{Sh1} \label{innerter}
A ternary derivation $(d_1, d_2, d_3)$ of an algebra $\A$ is called an {\bf inner ternary derivation} if there exist $a, b, c \in \A$ such that
\[
(d_1, d_2, d_3) = (L_a + R_b, \, L_a + R_c,\, -L_c + R_b).
\]
\end{definition}
Notice that $d_1 = d_2 = d_3$ implies $a = -b = -c$ so $d_1 = R_b - L_b$. This shows that the derivation $d_1$ is inner, as expected. 

We write $\mathrm{InnTerDer}(\A)$ for the set consisting of all inner ternary derivations of $\A$. A characterization of inner ternary derivations follows. 

\begin{theorem} \label{innerterder}
A ternary derivation $(d + L_x + R_y, d + L_x, d + R_y)$ of an algebra $\A$ is inner if and only if $d$ is an inner derivation of $\A.$
\end{theorem}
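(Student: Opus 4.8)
The plan is to combine the uniqueness statement of Corollary~\ref{Candidothm2} with the explicit shape of an inner ternary derivation recorded in Definition~\ref{innerter}, and to exploit the standing hypothesis that $\A$ is unital. The crucial observation is that in a unital algebra the maps $z\mapsto L_z$ and $z\mapsto R_z$ are injective, since $L_z(1)=z=R_z(1)$; hence an equality of left (respectively right) multiplication operators forces an equality of the underlying elements. This is the only real input needed, so the argument is essentially bookkeeping once this is in place.

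For the forward implication, I would suppose that $(d+L_x+R_y,\,d+L_x,\,d+R_y)$ is inner, so that by Definition~\ref{innerter} there exist $a,b,c\in\A$ with
\begin{align*}
d+L_x+R_y &= L_a+R_b,\\
d+L_x &= L_a+R_c,\\
d+R_y &= -L_c+R_b.
\end{align*}
Subtracting the second line from the first gives $R_y=R_{b-c}$, and subtracting the third from the first gives $L_x=L_{a+c}$. By injectivity of $z\mapsto R_z$ and $z\mapsto L_z$ I conclude $y=b-c$ and $x=a+c$. Substituting $L_x=L_a+L_c$ into the second equation then yields $d=L_a+R_c-L_x=R_c-L_c$, which is precisely an inner derivation of $\A$.

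For the converse, I would start from an inner derivation $d=R_c-L_c$ and produce the required elements explicitly: set $a:=x-c$ and $b:=y+c$, keeping the same $c$. A direct check shows $L_a+R_c=L_x-L_c+R_c=d+L_x$, then $-L_c+R_b=-L_c+R_y+R_c=d+R_y$, and finally $L_a+R_b=L_x+R_y+R_c-L_c=d+L_x+R_y$, so the triple matches the template of Definition~\ref{innerter} and is therefore inner.

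I do not expect a genuine obstacle here: the whole content is the recognition that unitality turns the operator identities into element identities, after which both directions collapse to the single relation $d=R_c-L_c$. The only points requiring care are the consistent bookkeeping of which of $a,b,c$ plays which role, and the remark that the decomposition $(d+L_x+R_y,\,d+L_x,\,d+R_y)$ furnished by Corollary~\ref{Candidothm2} is exactly what allows us to read off the distinguished derivation $d$ unambiguously.
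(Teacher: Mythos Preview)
Your proof is correct and follows essentially the same argument as the paper: subtract pairs of the three defining equations to isolate $R_y$ and $L_x$ as $R_{b-c}$ and $L_{a+c}$, then substitute back to obtain $d=R_c-L_c$; for the converse, set $a=x-c$ and $b=y+c$ and verify directly. The only minor difference is that you invoke injectivity of $z\mapsto L_z$ and $z\mapsto R_z$ to pass to element equalities, whereas the paper works purely at the level of operator identities (which suffices, since one only needs $L_x=L_{a+c}$ and $R_y=R_{b-c}$ as operators to substitute); this extra step is harmless but not strictly required.
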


\begin{proof}
First suppose that the ternary derivation $(d + L_x + R_y, d + L_x, d + R_y)$ is inner. Then there exist $a, b, c \in \A$ such that
\begin{align*}
    d + L_x + R_y &= L_a + R_b, \\
    d + L_x &= L_a + R_c, \\
    d + R_y &= -L_c + R_b.
\end{align*}
Subtracting the latter two equations from the first, we obtain
\begin{align*}
R_y &= L_a + R_b - L_a - R_c = R_{b-c}, \\
L_x &= L_a + R_b + L_c - R_b = L_{a+c}.
\end{align*}
Substituting these values for $L_x$ and $R_y$ into the first equation, we see that $d = R_c - L_c.$ Hence, $d$ is an inner derivation of $\A.$

Now assume that $d$ is an inner derivation of $\A.$ So there is some $c \in \A$ such that $d = R_c - L_c.$ Then,
\begin{align*}
    d + L_x + R_y &= L_{x-c} + R_{c+y}, \\
    d + L_x &=  L_{x-c} + R_c, \\
    d + R_y &= - L_c + R_{c+y}.
\end{align*}
Taking $a = x -c$ and $b = c + y,$ we can see that $(d + L_x + R_y, d + L_x, d + R_y) = (L_a + R_b, L_a + R_c, -L_c + R_b)$ for some $a, b, c \in \A.$ Hence the ternary derivation is inner.
\end{proof}

Now we move to the case of inner ternary derivations of triangular algebras.

\begin{theorem}
Let $\T = \Tri(A,M,B)$ be a triangular algebra with $M$ faithful and $(d_1,d_2,d_3)$ be a ternary derivation of $\T$ of the form given by \eqref{td}. Then $(d_1,d_2,d_3) \in \mathrm{InnTerDer}(\T)$ if and only if there is some $i \in \{1,2,3\}$ such that $\tau_i (m) = a_1 m + m b_1$ for some fixed $a_1 \in A$ and $b_1 \in B$.
\end{theorem}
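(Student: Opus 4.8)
The plan is to pull everything back to the ``intrinsic'' derivation $d$ attached to $(d_1,d_2,d_3)$ and then apply Theorem \ref{innerterder}. First I would use Corollary \ref{Candidothm2} to write $(d_1,d_2,d_3)=(d+L_x+R_y,\,d+L_x,\,d+R_y)$ for a unique derivation $d$ of $\T$ and elements $x,y\in\T$. By Theorem \ref{innerterder}, membership in $\mathrm{InnTerDer}(\T)$ is equivalent to $d$ being an inner derivation of $\T$, so the whole problem reduces to characterizing when $d$ is inner in terms of the maps $\tau_i$. Recall from the proof of Theorem \ref{terderchar} that $d$ is encoded, as in \eqref{derivation}, by a derivation $\delta$ of $A$, a derivation $\mu$ of $B$, an element $n\in M$, and a linear map $\tau\colon M\to M$ obeying $\tau(am)=\delta(a)m+a\tau(m)$ and $\tau(mb)=m\mu(b)+\tau(m)b$, and that $(\tau_1,\tau_2,\tau_3)=(\tau+L_{a_x}+R_{b_y},\,\tau+L_{a_x},\,\tau+R_{b_y})$, where $a_x,b_y$ are the diagonal entries of $x,y$.

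Next I would dispose of the ``some $i$'' quantifier. Since each $\tau_i$ differs from $\tau$ only by operators $L_{a'}$ and $R_{b'}$ (with $a'\in A$, $b'\in B$), which are themselves of the shape $m\mapsto a'm+mb'$, the map $\tau_i$ has the form $m\mapsto a_1m+mb_1$ for one (equivalently, every) index $i$ if and only if $\tau$ itself has the form $m\mapsto \widehat a\,m+m\widehat b$ for some $\widehat a\in A$, $\widehat b\in B$; the relevant $A$- and $B$-coefficients merely get shifted by $a_x$ and $b_y$. This reduces the theorem to the clean equivalence: $d$ is inner if and only if $\tau(m)=\widehat a\,m+m\widehat b$ for some $\widehat a,\widehat b$.

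For the equivalence itself I would compute, for $z=\left(\begin{smallmatrix}a_z&m_z\\&b_z\end{smallmatrix}\right)$, that $R_z-L_z$ is of the form \eqref{derivation} with $\delta=R_{a_z}-L_{a_z}$, $\mu=R_{b_z}-L_{b_z}$, $n=m_z$, and $\tau$-part $m\mapsto m b_z-a_z m$. The forward direction is then immediate: if $d=R_z-L_z$, its $\tau$-part already has the desired form. For the converse I would substitute $\tau(m)=\widehat a\,m+m\widehat b$ into the two compatibility identities; after cancellation this gives $(\widehat a a-a\widehat a-\delta(a))m=0$ and $m(b\widehat b-\widehat b b-\mu(b))=0$ for all $m\in M$. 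Invoking the faithfulness of $M$ forces $\delta=L_{\widehat a}-R_{\widehat a}$ and $\mu=R_{\widehat b}-L_{\widehat b}$, so that $\delta$ and $\mu$ are inner derivations implemented by the \emph{same} elements that govern $\tau$. Setting $a_z=-\widehat a$, $b_z=\widehat b$ and $m_z=n$ then realizes $d=R_z-L_z$, proving $d$ inner.

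The hard part is the converse, and specifically the single step where faithfulness of $M$ is used: it is what converts the pointwise information $\tau(m)=\widehat a\,m+m\widehat b$ into the global assertions that $\delta$ and $\mu$ are inner with matching implementing elements. Without faithfulness the annihilation conditions would not yield $\delta$ and $\mu$, and $d$ could fail to be inner even though $\tau$ has the prescribed shape, so this hypothesis is exactly what makes the characterization go through. Everything else — matching the quadruple $(\delta,\mu,n,\tau)$ against the data of $R_z-L_z$ and tracking the coefficient shifts between $\tau$ and the $\tau_i$ — is routine bookkeeping.
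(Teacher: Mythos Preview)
Your proof is correct and follows essentially the same route as the paper: reduce via Corollary~\ref{Candidothm2} and Theorem~\ref{innerterder} to the question of whether $d$ is inner, then pass between $\tau$ and the $\tau_i$ using the shifts $L_{a_x}$, $R_{b_y}$ coming from the proof of Theorem~\ref{terderchar}. The only difference is that the paper cites \cite[Proposition~2.2.3]{Ch1} for the equivalence ``$d$ inner $\Leftrightarrow$ $\tau(m)=a_0m+mb_0$'', whereas you supply a direct argument from the compatibility identities and faithfulness of $M$; this makes your version self-contained and pinpoints exactly where the faithfulness hypothesis enters.
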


\begin{proof}
By Corollary \ref{Candidothm2} there exist a unique derivation $d$ of $\T$ and unique elements $x, y \in \T$ such that $(d_1, d_2, d_3) = (d + L_x + R_y, d + L_x, d + R_y)$. Theorem \ref{innerterder} yields $(d_1, d_2, d_3)\in \mathrm{InnTerDer}(\T)$ if and only if $d$ is an inner derivation of $\T$. 
Now, suppose that $d$ is written as in \eqref{derivation}. Then \cite[Proposition 2.2.3]{Ch1} implies that $d$ is inner if and only if $\tau(m) = a_0 m + m b_0$ for some $a_0 \in A$ and $b_0 \in B$.

First suppose that $\tau(m) = a_0 m + m b_0$ for some fixed $a_0 \in A$ and $b_0 \in B.$ From the proof of Theorem \ref{terderchar}, we obtain
\begin{equation*}
\tau_1(m) = \tau(m) + a_x m + m b_y = (a_0 + a_x) m + m (b_y + b_0).
\end{equation*}
Taking $a_1 = a_0 + a_x$ and $b_1 = b_y + b_0$ we have $\tau_i (m) = a_1 m + m b_1$ for $i = 1$.

Now assume that $\tau_i(m) = a_1 m + m b_1$ for some $i \in \{1,2,3\}$. If it holds for $i = 1,$ then from the proof of Theorem \ref{terderchar}, we obtain
\begin{equation*}
\tau(m) = \tau_1(m) - a_x m - m b_y = (a_1 -a_x) m + m (b_1 - b_y).
\end{equation*}
Taking $a_0 = a_1 - a_x$ and $b_0 = b_1 - b_y,$ we obtain the desired result. Using a similar method we can prove the result for $i = 2$ and $i = 3$.
\end{proof}


\subsection{Components of a ternary derivation}

It was proved in \cite[Corollary 2.2.2]{Ch1} that the faithfulness of the bimodule yields a weaker characterization of derivations of $\T$. More precisely, a linear map $d$ of a triangular algebra $\T = \Tri(A, M, B)$ with $M$ faithful, is a derivation of $\T$ if and only if it can be written as in \eqref{derivation} and satisfies that $\tau(am) = \delta(a)m + a \tau(m)$, $\tau(mb) = m \mu(b) + \tau(m)b$, for all $a\in A$, $m \in M$ and $b \in B$. In other words, $\delta$ and $\mu$ do not need to be derivations of $A$ and $B$, respectively. This is no longer the case for ternary derivations:

\begin{example}
Let $\A$ be an algebra and $d$ a (nonzero) derivation of $\A$. Consider the triangular algebra $\T = \Tri(\A, \A, \A)$. Clearly, $\A$ is a faithful $(\A, \A)$-bimodule.
The triple of linear maps $(d_1, d_2, d_3)$, where
\allowdisplaybreaks
\begin{align*}
d_1
\left(
\begin{array}{@{}cc@{}}
a & m  \\
  & b
\end{array}
\right) & = 
\left(
\begin{array}{@{}cc@{}}
0 &  d(m) 
 \\ [1.2mm]
  & 0
\end{array}
\right), 
\\ 
d_2
\left(
\begin{array}{@{}cc@{}}
a & m  \\
  & b
\end{array}
\right) & =
\left(
\begin{array}{@{}cc@{}}
d(a) & d(m) 
 \\ [1.2mm]
  & 0
\end{array}
\right),
\\
d_3
\left(
\begin{array}{@{}cc@{}}
a & m  \\
  & b
\end{array}
\right) & =
\left(
\begin{array}{@{}cc@{}}
0 & d(m) 
 \\ [1.2mm]
  & d(b)
\end{array}
\right),
\end{align*}
satisfies Conditions (iii) and (iv) of Theorem \ref{terderchar} and neither $(0, d, 0)$ nor $(0, 0, d)$ are ternary derivations of $\A$.
\end{example}

The previous example inspired us to search for a relationship among the components of a ternary derivation of an arbitrary algebra $\A$. To be more precise, if two ternary derivations of $\A$ share the second (respectively, third) component, is there any relationship between their first and third (respectively, first and second) components?

\smallskip

Let $\A$ be an algebra. For $a, b \in \A$, we write $T_{a, b} = L_a - R_b$. 

\begin{proposition}\label{melon}
For $a, b, c, d, e, f \in \A$, the following assertions hold: 
\begin{enumerate}
\item[\rm (i)] If $a, b, c, d, e, f \in \A^\times$, then $(L_aR_b, L_cR_d, L_eR_f) \in \teraut(\A)$ if and only if $c^{-1}a, bf^{-1}, de \in Z(\A)$ and $ab = cdef$.

\smallskip

\item[\rm (ii)] $(T_{a,b}, T_{c,d}, T_{e,f}) \in \terder(\A)$ if and only if $a-c, b-f, e-d\in Z(\A)$ and $a + f + d = c + b + e$.
\end{enumerate}
\end{proposition}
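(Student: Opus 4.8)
The plan is to prove both parts directly, by unwinding the defining identity of a ternary automorphism (resp.\ ternary derivation) for the specific operators given, reducing each to a single identity in two free variables $x,y\in\A$, and then peeling off the stated conditions by specializing $x$ and $y$ to $1$. Throughout I will use that $L_pR_q(z)=pzq$, so that each component is simply a map of the form $z\mapsto pzq$ (resp.\ $z\mapsto pz-zq$). A small auxiliary fact I will establish and reuse in (i) is the \emph{operator identity lemma}: for invertible $p,q,s,t$, one has $pzq=szt$ for all $z$ if and only if $s^{-1}p\in Z(\A)$ and $pq=st$ (multiply on the left by $s^{-1}$ and on the right by $t^{-1}$, specialize $z=1$ to obtain $qt^{-1}=(s^{-1}p)^{-1}$, and read off that $s^{-1}p$ commutes with every $z$).

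For (i) I would expand $\sigma_1(xy)=\sigma_2(x)\sigma_3(y)$ with $\sigma_1=L_aR_b$, $\sigma_2=L_cR_d$, $\sigma_3=L_eR_f$, turning the ternary automorphism condition into the identity $axyb=cx(de)yf$ for all $x,y\in\A$. Setting $y=1$ gives $axb=cx(def)$ for all $x$, and setting $x=1$ gives $ayb=(cde)yf$ for all $y$. The operator lemma applied to the first yields $c^{-1}a\in Z(\A)$ and $ab=cdef$; applied to the second it yields $(cde)^{-1}a\in Z(\A)$. Then I would combine: since $(cde)^{-1}a=(de)^{-1}(c^{-1}a)$ and $c^{-1}a$ is central, centrality of $(cde)^{-1}a$ forces $(de)^{-1}\in Z(\A)$, hence $de\in Z(\A)$; and from $a=c(c^{-1}a)$ together with $ab=cdef$ one gets $bf^{-1}=(c^{-1}a)^{-1}(de)\in Z(\A)$. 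This establishes necessity. For sufficiency, writing $\alpha=c^{-1}a$, $\beta=bf^{-1}$, $\gamma=de$ (all central), I would compute $axyb=\alpha\beta\,cxyf$ and $cx(de)yf=\gamma\,cxyf$, and note that $ab=cdef$ is exactly the statement $\alpha\beta=\gamma$, so the two sides coincide.

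For (ii) I would expand $d_1(xy)=d_2(x)y+xd_3(y)$ with $d_i=T_{a,b},T_{c,d},T_{e,f}$ and collect terms, turning the ternary derivation condition into $(a-c)xy+x(d-e)y+xy(f-b)=0$ for all $x,y\in\A$. Writing $u=a-c$, $v=d-e$, $w=f-b$, specializing $x=y=1$ gives $u+v+w=0$, which is precisely $a+f+d=c+b+e$; specializing $y=1$ gives $ux+x(v+w)=0$, i.e.\ $ux=xu$ via $v+w=-u$, so $u\in Z(\A)$; specializing $x=1$ gives $(u+v)y+yw=0$, i.e.\ $yw=wy$ via $u+v=-w$, so $w\in Z(\A)$; and then $v=-u-w\in Z(\A)$. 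This gives the three centrality conditions $a-c,\,b-f,\,e-d\in Z(\A)$ and the trace relation. Conversely, if $u,v,w\in Z(\A)$ with $u+v+w=0$, then $uxy+xvy+xyw=(u+v+w)xy=0$, so the identity holds.

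The only genuinely delicate point is the bookkeeping in (i): one must set up the operator identity lemma and then track central elements and their inverses carefully, the payoff being that the three listed memberships are not independent (any two of $c^{-1}a,\,bf^{-1},\,de\in Z(\A)$ force the third, given $ab=cdef$), so the statement is consistent and no information is lost in the specializations. Part (ii) is entirely routine once the defining identity is rearranged linearly; alternatively it could be deduced from Corollary \ref{Candidothm2} by matching $T_{a,b}=L_a+R_{-b}$ against the normal form $(d+L_x+R_y,\,d+L_x,\,d+R_y)$, but the direct specialization argument is cleaner.
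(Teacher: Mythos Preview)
Your argument is correct. For part (i) your approach is essentially the same as the paper's: both expand the identity $a(xy)b=(cxd)(eyf)$ and specialize $x$ and $y$ to $1$ to extract the central elements and the relation $ab=cdef$; your ``operator identity lemma'' is a clean repackaging of the same ad hoc manipulations the paper performs (multiplying by $a^{-1}$ and $f^{-1}$ and reading off commutation).

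For part (ii) there is a genuine difference. The paper does not argue directly at all: it simply says that (ii) follows from (i) by an application of the Lie functor, in keeping with the functorial philosophy developed earlier (ternary derivations as the Lie algebra of ternary automorphisms). Your direct specialization argument, rewriting the condition as $(a-c)xy+x(d-e)y+xy(f-b)=0$ and evaluating at $x=1$ and $y=1$, is more elementary and entirely self-contained; it does not require any of the group-functor machinery. The trade-off is that the paper's one-line deduction illustrates the unifying theme of the article (derive additive statements from multiplicative ones via $\lie$), whereas your argument would be readable by someone who skipped Sections~2--3. Both are valid; yours is arguably cleaner for this isolated proposition, while the paper's is more in the spirit of the surrounding exposition.
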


\begin{proof}
For (i), let $a, b, c, d, e, f \in \A^\times$. Assume first that $(L_aR_b, L_cR_d, L_eR_f) \in \teraut(\A)$. Then 
\begin{equation} \label{watermelon}
a(xy)b = (cxd)(eyf),   
\end{equation}
for all $x, y \in \A$.
Taking $x = y = 1$ in \eqref{watermelon} gives $ab = cdef$ and so $bf^{-1} = a^{-1}cde$. Multiplying \eqref{watermelon} by $a^{-1}$ on the left, by $f^{-1}$ on the right and setting $x = 1$ we get
$ybf^{-1} = a^{-1}c dey = bf^{-1}y$, which proves that $bf^{-1} \in Z(\A)$. Similarly, one can prove that $de \in Z(\A)$; lastly, $c^{-1}a \in \Z(A)$ follows from $ab = cdef$.

Conversely, suppose that $c^{-1}a, bf^{-1}, de \in Z(\A)$ and $ab = cdef$. For $x, y \in \A$ we have 
\begin{align*}
(L_aR_b)(xy) & = a(xy)b = (cdefb^{-1})xy b = (cde)(xy)(fb^{-1}b) = c(de)x(yf) =
\\
& = (cxd)(eyf) = (L_cR_d)(x)(L_eR_f)(y),
\end{align*}
finishing the proof.

\smallskip 

Then (ii) follows from (i) by an application of the Lie functor.
\end{proof}

\begin{corollary} 
\, \, \hfill
\begin{enumerate}
\item[\rm (i)]
The triples $(L_a R_b, L_a R_c, L_{c^{-1}}R_b)$, $(L_a, L_a, \mathrm{Id}_\A)$, $(R_b, \mathrm{Id}_\A, R_b)$ are ternary automorphisms of $\A$, for all $a, b, c \in \A^\times$.
\smallskip
\item[\rm (ii)]
The triples $(T_{a,b},T_{a,c},T_{c,b})$, $(L_a, L_a, 0)$, $(R_b, 0, R_b)$ are ternary derivations of $\A$, for all $a, b, c \in \A$. 
\end{enumerate}
\end{corollary}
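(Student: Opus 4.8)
The plan is to realize each triple appearing in the statement as a special case of Proposition~\ref{melon}, so that the defining ternary identity never has to be checked directly; instead one only verifies the (trivial) centrality and product conditions recorded there.

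For part~(i) I would begin with the triple $(L_aR_b, L_aR_c, L_{c^{-1}}R_b)$. Matching it against the general form $(L_{a_1}R_{b_1}, L_{c_1}R_{d_1}, L_{e_1}R_{f_1})$ of Proposition~\ref{melon}(i) forces $a_1 = a$, $b_1 = b$, $c_1 = a$, $d_1 = c$, $e_1 = c^{-1}$, $f_1 = b$. With these choices the three elements that must be central become $c_1^{-1}a_1 = a^{-1}a = 1$, $b_1f_1^{-1} = bb^{-1} = 1$ and $d_1e_1 = cc^{-1} = 1$, each of which lies in $Z(\A)$ trivially, while the product condition reads $a_1b_1 = ab = a\,c\,c^{-1}\,b = c_1d_1e_1f_1$. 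Hence Proposition~\ref{melon}(i) applies and the triple is a ternary automorphism. The remaining two triples of~(i) require no separate argument: specializing $b = c = 1$ yields $(L_a, L_a, \mathrm{Id}_\A)$, and specializing $a = c = 1$ yields $(R_b, \mathrm{Id}_\A, R_b)$.

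For part~(ii) the cleanest route is to apply the Lie functor to part~(i), exactly as Proposition~\ref{melon}(ii) was deduced from Proposition~\ref{melon}(i); alternatively one verifies the conditions of Proposition~\ref{melon}(ii) directly. Writing the general ternary derivation as $(T_{a_1,b_1}, T_{c_1,d_1}, T_{e_1,f_1})$, the triple $(T_{a,b}, T_{a,c}, T_{c,b})$ corresponds to $a_1 = a$, $b_1 = b$, $c_1 = a$, $d_1 = c$, $e_1 = c$, $f_1 = b$; then $a_1 - c_1 = b_1 - f_1 = e_1 - d_1 = 0 \in Z(\A)$ and $a_1 + f_1 + d_1 = a + b + c = c_1 + b_1 + e_1$, so Proposition~\ref{melon}(ii) applies. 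For the last two triples I would use the identities $L_a = T_{a,0}$ and $R_b = T_{0,-b}$, the sign in the latter coming from $R_{-b} = -R_b$: then $(L_a, L_a, 0)$ is the specialization $b = c = 0$ and $(R_b, 0, R_b)$ is the specialization $a = c = 0$ with $b$ replaced by $-b$.

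I expect essentially no genuine obstacle here, since the statement is a direct harvest of Proposition~\ref{melon}. The only point demanding care is the bookkeeping of the variable matching against the six-parameter forms, and in particular remembering the sign in $R_b = T_{0,-b}$ when fitting the third triple of~(ii).
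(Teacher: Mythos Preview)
Your proposal is correct and matches the paper's intent: the corollary is stated without proof immediately after Proposition~\ref{melon}, so the paper regards it as an instant specialization of that proposition, exactly as you carry out. Your bookkeeping (including the sign in $R_b = T_{0,-b}$) is accurate.
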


Since we want to investigate ternary derivations sharing the second (respectively, third) components, it is natural to look at the ternary derivations whose second (respectively, third) components are 0. A few consequences of our previous results follow.

\begin{corollary} \label{0secondcomponent}
Let $\A$ be an algebra. 
\begin{itemize}
\item[\rm(i)] The only ternary automorphism of $\A$ whose second component is $1_\A$ are of the form $(R_a, 1_\A, R_a)$ for some $a \in \A^\times$.
\smallskip
\item[\rm(ii)] The only ternary automorphisms of $\A$ with third second component is $1_\A$ are of the form $(L_a, L_a, 1_\A)$ for some $a \in \A^\times$.
\item[\rm(iii)] The only ternary derivations of $\A$ with zero second component are of the form $(R_a, 0, R_a)$ for some $a \in \A$.
\smallskip
\item[\rm(iv)] The only ternary derivations of $\A$ with zero third component are of the form $(L_a, L_a, 0)$ for some $a \in \A$.
\end{itemize}
\end{corollary}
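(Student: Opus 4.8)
The plan is to read off all four statements directly from the structural descriptions in Theorem \ref{Candidothm1} and Corollary \ref{Candidothm2}, using a single observation: an automorphism fixes the unit and a derivation annihilates it. The converse inclusions—that the displayed triples really are ternary automorphisms, respectively ternary derivations—are already recorded in the preceding corollary (the triples $(R_b, \mathrm{Id}_\A, R_b)$, $(L_a, L_a, \mathrm{Id}_\A)$ and $(R_b, 0, R_b)$, $(L_a, L_a, 0)$ appear there), so only the forward directions require argument.

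For (i) I would take a ternary automorphism $(\sigma_1, \sigma_2, \sigma_3)$ with $\sigma_2 = \mathrm{Id}_\A$ and write it, via Theorem \ref{Candidothm1}, as $(R_yL_x\sigma, L_x\sigma, R_y\sigma)$ for a unique $\sigma \in \aut(\A)$ and $x, y \in \A^\times$. Evaluating $L_x\sigma = \mathrm{Id}_\A$ at $1 \in \A$ gives $x\,\sigma(1) = 1$; since $\sigma(1) = 1$ this forces $x = 1$, whence $L_x = \mathrm{Id}_\A$ and therefore $\sigma = \mathrm{Id}_\A$. Substituting back yields $\sigma_1 = \sigma_3 = R_y$, so the triple is $(R_a, 1_\A, R_a)$ with $a = y \in \A^\times$. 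Statement (ii) is the mirror image: imposing $\sigma_3 = R_y\sigma = \mathrm{Id}_\A$ and evaluating at $1$ forces $y = 1$, hence $\sigma = \mathrm{Id}_\A$ and $\sigma_1 = \sigma_2 = L_x$, giving $(L_a, L_a, 1_\A)$ with $a = x$.

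For (iii) and (iv) I would run the identical argument at the infinitesimal level through Corollary \ref{Candidothm2}: a ternary derivation is $(d + L_x + R_y,\, d + L_x,\, d + R_y)$ for a unique derivation $d$ and $x, y \in \A$. In (iii), the hypothesis $d + L_x = 0$ evaluated at $1$, together with $d(1) = 0$, forces $x = 0$ and hence $d = 0$; the remaining components then collapse to $R_y$, giving $(R_a, 0, R_a)$ with $a = y$. In (iv), $d + R_y = 0$ evaluated at $1$ yields $y = 0$, hence $d = 0$ and $(L_a, L_a, 0)$ with $a = x$.

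There is no serious obstacle here; the content is entirely in substituting back once the twisting element has been shown to be trivial. The only point deserving care is that each ``evaluate at $1$'' step relies on $\sigma$ being an honest automorphism (so $\sigma(1) = 1$) in the group case, and on $d$ being an honest derivation (so $d(1) = 0$) in the Lie case—both guaranteed by the uniqueness clauses of Theorem \ref{Candidothm1} and Corollary \ref{Candidothm2}. I would also note in passing that the resulting element $a$ is invertible in (i) and (ii), since $a = y$ or $a = x$ is invertible by hypothesis, whereas in (iii) and (iv) it lies merely in $\A$, matching the respective statements.
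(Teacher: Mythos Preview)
Your argument is correct. The paper's own proof is even terser: it declares (i) and (ii) ``trivial'' and obtains (iii), (iv) from them by an application of the Lie functor. The intended trivial argument for (i) is presumably the bare-hands one: if $(\sigma_1,\mathrm{Id}_\A,\sigma_3)\in\teraut(\A)$ then $\sigma_1(xy)=x\,\sigma_3(y)$; setting $x=1$ gives $\sigma_1=\sigma_3$, and setting $y=1$ gives $\sigma_1=R_{\sigma_3(1)}$, with $\sigma_3(1)$ invertible since $\sigma_1\in\GL(\A)$. This avoids invoking Theorem~\ref{Candidothm1} altogether. Your route through the structure theorems is equally valid and has the virtue of being uniform across all four parts; the paper instead proves the automorphism statements directly and then transports them to the derivation side via the Lie functor rather than rerunning the argument through Corollary~\ref{Candidothm2}. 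The two approaches are close cousins, since Corollary~\ref{Candidothm2} is itself the Lie-functor shadow of Theorem~\ref{Candidothm1}.
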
  

\begin{proof}
The proofs of (i) and (ii) are trivial, and (iii) and (iv) follow
from an application of the Lie functor. 
\end{proof}

\begin{corollary} \label{component2}
Let $\A$ be an algebra and $(d_1, d_2, d_3), (d_1', d_2, d_3') \in \terder(\A)$. Then $d_1 - d_1' = d_3 - d_3' = R_b$ for some $b\in \A$.
Conversely, if $(d_1,d_2,d_3) \in \terder(\A)$ satifies $d_1 - d_1' = d_3 - d_3' = R_b$,  (for some $b \in \A$) then $(d_1',d_2,d_3') \in \terder(\A)$. 
\end{corollary}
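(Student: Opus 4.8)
The plan is to exploit the fact that $\terder(\A)$ is closed under componentwise subtraction, together with the classification of ternary derivations having vanishing second component supplied by Corollary \ref{0secondcomponent}(iii). First I would record the elementary observation that if $(e_1, e_2, e_3)$ and $(e_1', e_2', e_3')$ both lie in $\terder(\A)$, then so does their difference $(e_1 - e_1', e_2 - e_2', e_3 - e_3')$: this is immediate upon subtracting the two defining identities $e_1(xy) = e_2(x)y + x e_3(y)$ and $e_1'(xy) = e_2'(x)y + x e_3'(y)$ and using linearity. Consequently, given the two ternary derivations $(d_1, d_2, d_3)$ and $(d_1', d_2, d_3')$ that share the same middle component, the triple $(d_1 - d_1', 0, d_3 - d_3')$ is a ternary derivation of $\A$ whose second component is zero.

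For the forward implication I would then invoke Corollary \ref{0secondcomponent}(iii), which asserts that every ternary derivation with zero second component is of the form $(R_b, 0, R_b)$ for some $b \in \A$. Applying this to $(d_1 - d_1', 0, d_3 - d_3')$ yields simultaneously $d_1 - d_1' = R_b$ and $d_3 - d_3' = R_b$ with one and the same $b$, which is precisely the asserted relation $d_1 - d_1' = d_3 - d_3' = R_b$.

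For the converse I would simply reverse the argument: the triple $(R_b, 0, R_b)$ is itself a ternary derivation (again by Corollary \ref{0secondcomponent}(iii)), so subtracting it from $(d_1, d_2, d_3) \in \terder(\A)$ produces $(d_1 - R_b, d_2, d_3 - R_b) = (d_1', d_2, d_3') \in \terder(\A)$ by the closure observation above. I expect essentially no obstacle here, as the whole statement is a formal consequence of the $R$-module structure of $\terder(\A)$ combined with the earlier classification; the only point deserving care is that the \emph{same} element $b$ governs both the first and third components, and this is exactly what the shape $(R_b, 0, R_b)$ in Corollary \ref{0secondcomponent}(iii) guarantees.
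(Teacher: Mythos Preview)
Your proposal is correct and follows exactly the approach the paper intends: the corollary is stated without proof precisely because it is an immediate consequence of Corollary~\ref{0secondcomponent}(iii) together with the obvious closure of $\terder(\A)$ under componentwise subtraction, and you have spelled this out accurately in both directions.
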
 

\begin{corollary}
Let $\A$ be an algebra and $(d_1, d_2, d_3) \in \terder(\A)$. Then 
\[
\{(d_1 + R_a, d_2, d_3 + R_a)| \, \, a\in \A\}
\] 
is the set consisting of all ternary derivations of $\A$ whose second component is $d_2$.
\end{corollary}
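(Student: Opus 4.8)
The plan is to deduce this directly from Corollary \ref{component2}, of which the present statement is essentially a repackaging as a set equality. Write $S$ for the set of all ternary derivations of $\A$ whose second component equals $d_2$, and set $T = \{(d_1 + R_a, d_2, d_3 + R_a) \mid a \in \A\}$. I would prove $S = T$ by establishing the two inclusions separately, keeping in mind throughout that $R$ is linear in its subscript, so that $R_{-a} = -R_a$ for every $a \in \A$.

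For the inclusion $T \subseteq S$, fix $a \in \A$ and put $d_1' = d_1 + R_a$ and $d_3' = d_3 + R_a$. Then $d_1 - d_1' = d_3 - d_3' = -R_a = R_{-a}$, so the converse part of Corollary \ref{component2}, applied to the ternary derivation $(d_1, d_2, d_3)$ with the choice $b = -a$, yields that $(d_1', d_2, d_3') = (d_1 + R_a, d_2, d_3 + R_a)$ is itself a ternary derivation; since its second component is $d_2$, it lies in $S$. For the reverse inclusion $S \subseteq T$, take an arbitrary $(d_1', d_2, d_3') \in S$. Applying the first part of Corollary \ref{component2} to the pair $(d_1, d_2, d_3)$ and $(d_1', d_2, d_3')$, which share the second component $d_2$, produces some $b \in \A$ with $d_1 - d_1' = d_3 - d_3' = R_b$. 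Hence $d_1' = d_1 - R_b = d_1 + R_{-b}$ and likewise $d_3' = d_3 + R_{-b}$; setting $a = -b$ exhibits $(d_1', d_2, d_3')$ as $(d_1 + R_a, d_2, d_3 + R_a) \in T$.

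Combining the two inclusions gives $S = T$, which is exactly the assertion. I do not expect any genuine obstacle here: the whole content is carried by Corollary \ref{component2}, and the only point requiring mild care is the bookkeeping of signs, namely that the free parameter $a$ in the statement corresponds to $-b$ in Corollary \ref{component2} via the identity $R_{-a} = -R_a$.
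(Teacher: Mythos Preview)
Your proof is correct and follows exactly the approach implicit in the paper: the corollary is stated there without proof, as an immediate repackaging of Corollary~\ref{component2}, and your two-inclusion argument simply makes that deduction explicit, including the minor sign bookkeeping $R_{-a} = -R_a$.
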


\begin{corollary} \label{component3}
Let $\A$ be an algebra and $(d_1, d_2, d_3), (d_1', d'_2, d_3)  \in \terder(\A)$. Then $d_1 - d_1' = d_2 - d_2' = L_a$ for some $a\in \A$.
Conversely, if $(d_1,d_2,d_3)  \in \terder(\A)$ satisfies $d_1 - d_1' = d_2 - d_2' = L_a$ (for some $a \in \A$) then $(d_1',d_2',d_3) \in \terder(\A)$.
\end{corollary}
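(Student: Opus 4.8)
The plan is to mirror the argument for Corollary \ref{component2}, interchanging the roles of the second and third components. First I would establish the forward implication by subtracting defining identities. Since $(d_1, d_2, d_3)$ and $(d_1', d_2', d_3)$ are both ternary derivations of $\A$ sharing the third component, for all $x, y \in \A$ we have $d_1(xy) = d_2(x)y + x d_3(y)$ and $d_1'(xy) = d_2'(x)y + x d_3(y)$. Subtracting these, the terms involving $d_3$ cancel and I obtain $(d_1 - d_1')(xy) = (d_2 - d_2')(x)y$. In other words, the triple $(d_1 - d_1', d_2 - d_2', 0)$ is a ternary derivation of $\A$ whose third component vanishes.

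Next I would invoke Corollary \ref{0secondcomponent}(iv), according to which every ternary derivation of $\A$ with zero third component has the shape $(L_a, L_a, 0)$ for some $a \in \A$. Applying this to the triple $(d_1 - d_1', d_2 - d_2', 0)$ produced above yields $d_1 - d_1' = d_2 - d_2' = L_a$, which is precisely the asserted relation.

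For the converse, I would exploit the $R$-module structure of $\terder(\A)$, which is available from the $R$-algebra isomorphism $\psi$ constructed in the proof of Corollary \ref{Candidothm2}. Given a ternary derivation $(d_1, d_2, d_3)$ together with maps $d_1', d_2'$ satisfying $d_1 - d_1' = d_2 - d_2' = L_a$, I would simply write $(d_1', d_2', d_3) = (d_1, d_2, d_3) - (L_a, L_a, 0)$. Since $(L_a, L_a, 0) \in \terder(\A)$ (again by Corollary \ref{0secondcomponent}(iv)) and $\terder(\A)$ is closed under differences, the triple $(d_1', d_2', d_3)$ is a ternary derivation of $\A$.

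I do not anticipate any genuine obstacle, as the statement is the exact dual of Corollary \ref{component2}: both halves reduce to the classification of ternary derivations with a vanishing outer component combined with the $R$-linearity of the ternary-derivation condition. The only point demanding care is bookkeeping, namely ensuring that the cancellation isolates the first and second components (so that the vanishing component is the third, and Corollary \ref{0secondcomponent}(iv) applies, rather than (iii)).
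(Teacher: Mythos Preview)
Your proposal is correct and matches the paper's intended approach: the paper states this result as a corollary immediately following Corollary~\ref{0secondcomponent} without proof, and your argument---subtracting the defining identities to produce a ternary derivation with vanishing third component, then invoking Corollary~\ref{0secondcomponent}(iv)---is precisely the route the paper has in mind.
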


\begin{lemma} \label{julve}
Let $\A$ be an algebra. 
\begin{enumerate}
\item[\rm (i)]
If $(\sigma_1, \sigma_2, \sigma_3) \in \teraut(\A)$, then $\sigma_j(xy)=\sigma_j(x)\sigma_j(1)^{-1}\sigma_j(y)$, for $j = 1, 2, 3$ and any $x, y \in \A$.
\smallskip
\item[\rm (ii)] 
If $(d_1, d_2, d_3) \in \terder(\A)$,  then $d_j(xy) - d_j(x)y - xd_j(y) = -x d_j(1)y$, for $j = 1, 2, 3$ and any $x, y\in\A$. 
\end{enumerate}
\end{lemma}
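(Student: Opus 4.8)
The plan is to reduce both statements to the structure theorem for ternary automorphisms (Theorem \ref{Candidothm1}) and then to obtain the derivation identity by the dual-number/Lie-functor device used throughout the paper, rather than by a separate calculation.

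For part (i), I would first invoke Theorem \ref{Candidothm1} to write $(\sigma_1, \sigma_2, \sigma_3) = (R_yL_x\sigma, L_x\sigma, R_y\sigma)$ for a unique $\sigma \in \aut(\A)$ and invertible $x, y \in \A^\times$. The key observation is that each component has the uniform shape $\sigma_j(g) = p_j\,\sigma(g)\,q_j$ with $p_j, q_j \in \A^\times$: namely $(p_1,q_1)=(x,y)$, $(p_2,q_2)=(x,1)$ and $(p_3,q_3)=(1,y)$. Since $\sigma(1)=1$, this gives $\sigma_j(1)=p_jq_j$, which is invertible (so that the right-hand side is meaningful), and $\sigma_j(1)^{-1}=q_j^{-1}p_j^{-1}$. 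Then, using multiplicativity of $\sigma$, a single computation
\[
\sigma_j(u)\,\sigma_j(1)^{-1}\,\sigma_j(v)=p_j\sigma(u)q_j\, q_j^{-1}p_j^{-1}\, p_j\sigma(v)q_j=p_j\sigma(u)\sigma(v)q_j=p_j\sigma(uv)q_j=\sigma_j(uv)
\]
settles all three components at once, with no case distinction.

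For part (ii), I would deduce the identity from (i) by passing to the algebra of dual numbers, exactly as $\lie(\affteraut(\A))=\terder(\A)$ suggests. Recall that $(d_1,d_2,d_3)\in\terder(\A)$ if and only if $(\Id+\e d_1,\Id+\e d_2,\Id+\e d_3)\in\teraut_{R[\e]}(\A_{R[\e]})$. Applying (i) to $\sigma_j=\Id+\e d_j$ and using $\sigma_j(1)=1+\e d_j(1)$, hence $\sigma_j(1)^{-1}=1-\e d_j(1)$ since $\e^2=0$, I would expand $\sigma_j(u)\sigma_j(1)^{-1}\sigma_j(v)$ and compare its $\e$-coefficient with that of $\sigma_j(uv)=uv+\e d_j(uv)$. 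The order-$\e$ term yields precisely $d_j(uv)=d_j(u)v+u d_j(v)-u d_j(1)v$, which is the claim after rearranging.

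The computations are routine; the only points requiring care are (a) checking that $\sigma_j(1)$ is invertible in part (i), which is immediate from Theorem \ref{Candidothm1} since it is a product of invertible elements, and (b) correctly truncating at order $\e$ in part (ii), in particular justifying the inverse expansion $(1+\e d_j(1))^{-1}=1-\e d_j(1)$ inside $\A_{R[\e]}$. I do not expect any substantive obstacle beyond this bookkeeping, since the uniform form $\sigma_j=p_j\sigma(\cdot)q_j$ makes the telescoping in (i) transparent and (ii) is then a formal consequence.
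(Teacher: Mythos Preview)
Your proposal is correct and follows essentially the same strategy as the paper: for (i) you exploit the structure $\sigma_j = p_j\,\sigma(\cdot)\,q_j$ to obtain the telescoping identity, and for (ii) you pass to dual numbers, which is exactly the ``application of the Lie functor'' the paper invokes. The only cosmetic difference is that the paper argues (i) directly for $j=1$ from the relations $\sigma_1=R_b\sigma_2=L_a\sigma_3$ (with $a=\sigma_2(1)$, $b=\sigma_3(1)$) without citing Theorem~\ref{Candidothm1} in full, whereas you invoke the structure theorem to treat all three components uniformly in one line.
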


\begin{proof} 
(i) We prove the result for $j = 1$. The other cases are similar. 

\noindent Notice that $\sigma_1 = R_b \sigma_2 = L_a \sigma_3$ for the invertible elements $a = \sigma_2(1)$ and $b = \sigma_3(1)$ of $\A$. From here we derive that 
$\sigma_1(xy) = \sigma_1(x)(ab)^{-1}\sigma_1(y) = \sigma_1(x)\sigma_1(1)^{-1}\sigma_1(y)$, as desired.
\smallskip

(ii) follows from (i) by an application of the Lie functor. See \cite[Lemma 1]{Sh1} for a direct proof for the case of $j = 1$.

\end{proof}

\begin{remark}\rm 
There is a sort of  converse of Lemma \ref{julve} (i): if $\sigma_1\in\GL(\A)$ satisfies
$\sigma_1(xy) = \sigma_1(x) \sigma_1(1)^{-1} \sigma_1(y)$ for any $x, y \in \A$, and $\sigma (1) = ab$ for some
$a, b \in\A^\times$, then $(\sigma, R_{b^{-1}}\sigma,  L_{a^{-1}}\sigma) \in \teraut(\A)$.
\end{remark}

\begin{lemma} \label{julve2}
Let $\A$ be an algebra and $(d_1, d_2, d_3)$ a triple of linear maps of $\A$. 
\begin{itemize}
\item[\rm (i)] If $d_1$ satisfies Lemma \ref{julve} {\rm (i)}, then $\big(d_1, d_1 - R_a, d_1 - L_{d_1(1) - a} \big) \in \terder(\A)$ for all $a \in A$.
\smallskip
\item[\rm (ii)] If $d_2$ satisfies Lemma \ref{julve} {\rm (i)}, then $\big(d_2 {+} R_a, d_2, d_2 + R_a {-} L_{d_2(1)}\big) {\in} \terder(\A)$ for all $a \in A$.
\smallskip
\item[\rm (iii)] If $d_3$ satisfies Lemma \ref{julve} {\rm (i)}, then $\big(d_3 {+} L_a, d_3 {+} L_a {-} R_{d_3(1)}, d_3\big) {\in} \terder(\A)$ for all $a \in A$.
\end{itemize}
\end{lemma}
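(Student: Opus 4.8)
The plan is to establish all three parts by directly verifying the defining identity of a ternary derivation, namely $\delta_1(xy) = \delta_2(x)y + x\delta_3(y)$, for the prescribed triple $(\delta_1,\delta_2,\delta_3)$ in each case. The hypothesis that a component $d_j$ ``satisfies Lemma~\ref{julve}'' is to be read at the infinitesimal level: it is the single identity
\[
d_j(xy) = d_j(x)\,y + x\,d_j(y) - x\,d_j(1)\,y \qquad (x,y\in\A),
\]
which is exactly the relation recorded in Lemma~\ref{julve}(ii). This is the only property of $d_j$ I will use.

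For part~(i), take $(\delta_1,\delta_2,\delta_3)=\big(d_1,\,d_1-R_a,\,d_1-L_{d_1(1)-a}\big)$ and expand
\[
\delta_2(x)\,y + x\,\delta_3(y) = \big(d_1(x)-xa\big)y + x\big(d_1(y)-(d_1(1)-a)y\big).
\]
The summand $-xay$ coming from $-R_a$ acting on $x$ cancels the summand $+xay$ coming from the $+a$ inside $L_{d_1(1)-a}$ acting on $y$, and what remains is $d_1(x)y + x\,d_1(y) - x\,d_1(1)\,y$, which equals $d_1(xy)=\delta_1(xy)$ by the displayed hypothesis. Parts~(ii) and~(iii) are the same one-line computation with the fixed component relocated: in~(ii) I fix the middle component $d_2$, apply the hypothesis to $d_2$, and check that the inserted $R_a$-terms together with the $-L_{d_2(1)}$-term reassemble $\delta_2(x)y+x\delta_3(y)$ into $d_2(xy)+(xy)a = \delta_1(xy)$; in~(iii) I fix the last component $d_3$ and check symmetrically that the inserted $L_a$-terms together with the $-R_{d_3(1)}$-term reassemble the right-hand side into $d_3(xy)+a(xy)=\delta_1(xy)$.

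There is no genuine obstacle here beyond the bookkeeping of which factor each multiplication operator acts on: once the hypothesis is read as the displayed derivation identity, each assertion collapses to a single cancellation, with the correction term $x\,d_j(1)\,y$ furnished precisely by that identity. As an alternative route, one could instead apply the Lie functor to the ternary automorphism factorizations appearing in the Remark following Lemma~\ref{julve}, but the direct verification above is shorter and fully self-contained.
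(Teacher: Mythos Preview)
Your proposal is correct and follows essentially the same approach as the paper: a direct verification of the ternary derivation identity $\delta_1(xy)=\delta_2(x)y+x\delta_3(y)$ using the hypothesis $d_j(xy)=d_j(x)y+xd_j(y)-x\,d_j(1)\,y$ (the paper proves only part~(ii) explicitly, via exactly the same one-line cancellation you carry out). Your reading of the hypothesis at the infinitesimal level---i.e., as Lemma~\ref{julve}~(ii) rather than~(i)---is the intended one, as the paper's own proof confirms.
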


\begin{proof}
We prove (ii): let $(d_1, d_2, d_3)$ be a triple of linear maps of $\A$ and $a \in \A$. For $x, y \in \A$ we have that 
\begin{align*}
d_2(x)y + x\big(d_2 + R_a - L_{d_2(1)}\big)(y) &= d_2(x)y + x d_2(y) + xya - xd_2(1)y
\\
& \stackrel{\rm Lemma \, \ref{julve} \, (i)}{=} d_2(xy) + xya = (d_2 + R_a)(xy),
\end{align*}
which shows that $\big(d_2 + R_a, \, d_2, \, d_2 + R_a - L_{d_2(1)}\big) \in \terder(\A)$, as desired. 
\end{proof}

We are now in a position to prove the analogue weaker caracterization of derivations of triangular algebras with faithful bimodule for ternary derivations:

\begin{theorem} \label{components}
Let $\T = \Tri(A, M, B)$ be a triangular algebra with $M$ faithful, and $(d_1, d_2, d_3)$ a triple of linear maps of $\T$ written as in 
\eqref{td}. Suppose that Conditions (iii) and (iv) of Theorem \ref{terderchar} are satisfied. Then there exist linear maps $\delta'_1, \delta'_3: A \to A$ and $\mu'_1, \mu'_2: B \to B$ such that $(\delta'_1, \delta_2, \delta'_3)$ and $(\mu'_1, \mu'_2, \mu_3)$ are ternary derivations of $A$ and $B$, respectively. Moreover, the triple $(d'_1, d'_2, d'_3)$ where 
\allowdisplaybreaks
\begin{align*}
d'_1
\left(
\begin{array}{@{}cc@{}}
a & m  \\
  & b
\end{array}
\right) & =
\left(
\begin{array}{@{}cc@{}}
\delta'_{1}(a) & an_1 {+} \tau_{1}(m) {+} n'_1 b
 \\ [1.2mm]
  & \mu'_{1}(b)
\end{array}
\right),
\\
d'_2
\left(
\begin{array}{@{}cc@{}}
a & m  \\
  & b
\end{array}
\right) & =
\left(
\begin{array}{@{}cc@{}}
\delta_{2}(a) & an_2 {+} \tau_{2}(m) {+} n'_1 b
 \\ [1.2mm]
  & \mu'_{2}(b)
\end{array}
\right),
\\
d'_3
\left(
\begin{array}{@{}cc@{}}
a & m  \\
  & b
\end{array}
\right) & =
\left(
\begin{array}{@{}cc@{}}
\delta'_{3}(a) & an_1 {+} \tau_{3}(m) {-} n_2 b
 \\ [1.2mm]
  & \mu_{3}(b)
\end{array}
\right),
\end{align*}
is a ternary derivation of $\T$. 
\end{theorem}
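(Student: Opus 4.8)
The plan is to leave the ``off-diagonal data'' $\delta_2,\mu_3,\tau_1,\tau_2,\tau_3,n_1,n_1',n_2$ untouched and to correct only the four diagonal maps $\delta_1,\delta_3$ (on $A$) and $\mu_1,\mu_2$ (on $B$). Since conditions (iii) and (iv) of Theorem \ref{terderchar} involve only $\delta_2$, $\mu_3$ and the $\tau_i$, all of which are unchanged in the primed triple, those two conditions persist automatically for $(d_1',d_2',d_3')$. Thus the whole problem reduces to producing $\delta_1',\delta_3'$ with $(\delta_1',\delta_2,\delta_3')\in\terder(A)$ and $\mu_1',\mu_2'$ with $(\mu_1',\mu_2',\mu_3)\in\terder(B)$; once this is done, Theorem \ref{terderchar} immediately certifies that $(d_1',d_2',d_3')$ is a ternary derivation of $\T$.

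To build $(\delta_1',\delta_2,\delta_3')$ I would invoke Lemma \ref{julve2}(ii): it suffices to show that $\delta_2$ satisfies the quasi-multiplicative identity $\delta_2(aa')=\delta_2(a)a'+a\delta_2(a')-a\delta_2(1_A)a'$, and then set $\delta_1'=\delta_2+R_s$ and $\delta_3'=\delta_2+R_s-L_{\delta_2(1_A)}$ (for instance $s=0$). The decisive manoeuvre is to evaluate (iii) at $a=1_A$, which gives $\tau_1=\tau_3+L_{\delta_2(1_A)}$ (left module action on $M$); feeding this back into (iii) yields the expansion $\tau_3(am)=\delta_2(a)m+a\tau_3(m)-\delta_2(1_A)am$. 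I would then compute $\delta_2(aa')m$ in two ways, writing $(aa')m=a(a'm)$ and applying (iii) to each: the resulting term $\tau_3(a'm)$ is rewritten by the expansion above, and everything collapses to $\bigl[\delta_2(a)a'+a\delta_2(a')-a\delta_2(1_A)a'\bigr]m$. Left faithfulness of $M$ over $A$ then cancels $m$ and delivers the desired identity for $\delta_2$.

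The construction of $(\mu_1',\mu_2',\mu_3)$ is entirely symmetric, using Lemma \ref{julve2}(iii) in place of (ii). Evaluating (iv) at $b=1_B$ gives $\tau_2=\tau_1-R_{\mu_3(1_B)}$ and hence the expansion $\tau_2(mb)=m\mu_3(b)+\tau_2(m)b-mb\mu_3(1_B)$; computing $m\mu_3(bb')$ via $m(bb')=(mb)b'$ and (iv), then invoking right faithfulness of $M$ over $B$, produces $\mu_3(bb')=\mu_3(b)b'+b\mu_3(b')-b\mu_3(1_B)b'$. Lemma \ref{julve2}(iii) then supplies $\mu_1'=\mu_3+L_t$ and $\mu_2'=\mu_3+L_t-R_{\mu_3(1_B)}$ (say $t=0$).

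I expect the only genuine obstacle to be the recurring nuisance term $\tau_3(a'm)$ (respectively $\tau_2(mb)$): a priori these maps carry no module-compatibility, so the naive associativity computation does not close. The resolution is precisely the specialisation $a=1_A$ of (iii) (respectively $b=1_B$ of (iv)), which pins down $\tau_1-\tau_3$ (respectively $\tau_1-\tau_2$) as a single module multiplication and thereby furnishes the missing expansion. Faithfulness of $M$ then performs the final bookkeeping, passing from an identity valid after multiplication by every $m\in M$ to an honest identity in $A$ and in $B$.
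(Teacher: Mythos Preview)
Your proposal is correct and follows essentially the same line as the paper's proof: both specialise (iii) at $a=1_A$ to obtain $\tau_1=\tau_3+L_{\delta_2(1_A)}$, exploit associativity of the $A$-action on $M$ together with faithfulness to deduce $\delta_2(aa')=\delta_2(a)a'+a\delta_2(a')-a\delta_2(1_A)a'$, then invoke Lemma~\ref{julve2}(ii); the $B$-side is handled symmetrically via Lemma~\ref{julve2}(iii), and Theorem~\ref{terderchar} closes the argument. The only cosmetic difference is that the paper writes out three instances of (iii) and subtracts them, whereas you equate two expansions of $\tau_1((aa')m)$ directly and substitute the formula for $\tau_3(a'm)$; the underlying computation is identical.
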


\begin{proof} 
Notice that Conditions (iii) and (iv) of Theorem \ref{terderchar} can be rewritten as $\tau_1 = \tau_3 + L_{\delta_2(1_A)}$ and $\tau_1 = \tau_2 + R_{\mu_3(1_B)}$, respectively. Given $a, a' \in A$ and $m \in M$, Condition (iii) of Theorem \ref{terderchar} applies to get  
\begin{align}
\tau_1(aa'm) & = \delta_2(aa')m + aa'\tau_3(m), \label{one}
\\
\tau_1(aa'm) & = \delta_2(a)a'm + a\tau_3(a'm), \label{two}
\\
a\tau_1(a'm) & = a\delta_2(a')m + aa'\tau_3(m). \label{three}
\end{align}
Now, $\eqref{one}-\eqref{two}-\eqref{three}$ gives 
\[
-a\tau_1(a'm) = \big(\delta_2(aa') - \delta_2(a)a' - a\delta_2(a')\big)m - a\tau_3(a'm),
\]
which implies that 
\[ 
a\tau_3(a'm) - a\tau_1(a'm) = \big(\delta_2(aa') - \delta_2(a)a' - a\delta_2(a')\big)m.
\]
Now, from $\tau_3 = \tau_1 - L_{\delta_2(1_A)}$ we derive that 
\[
-a \delta_2(1_A) a'm = \big(\delta_2(aa')- \delta_2(a)a'- a\delta_2(a')\big).
\]
So $- a\delta_2(1_A)a' = \delta_2(aa') - \delta_2(a)a' - a\delta_2(a')$, since $M$ is faithful. Lemma \ref{julve2} (ii) yields that 
the triple $(\delta_2 + R_a, \, \delta_2, \, \delta_2 + R_a - L_{\delta_2(1_A)})$ is a ternary derivation of $A$ for all $a \in A$. 
Similarly, using now Lemma \ref{julve2} (iii), one can prove that $(\mu_3 + L_b, \, \mu_3 + L_b - R_{\mu_3(1_B)}, \, \mu_3)$ is a ternary derivation of $B$ for all $b \in B$. An application of Theorem \ref{terderchar} concludes the proof.
\end{proof}

\section{Idempotent preserving automorphisms} 

In this section we investigate the automorphisms of a triangular algebra $\T$ preserving the idempotent $p$ (and therefore the idempotent $q$). Notice that, in general, an automorphism of $\T$ does not need to preserve $p$. For example, consider $\T = \Tri(T_2(R), T_2(R), T_2(R))$, where $T_2(R)$ denotes the square upper triangular matrices with coefficients in $R$. J\o ndrup proved in \cite[page 210]{Jn} that the map $\sigma: \T \to \T$ defined by 
\[
\sigma
\left(
\begin{array}{@{}cc@{}}
\left(
\begin{array}{@{}cc@{}}
a & c  \\
0  & b
\end{array}
\right) & \left(
\begin{array}{@{}cc@{}}
d & f  \\
0  & e
\end{array}
\right)  \\
&
\\
  & \left(
\begin{array}{@{}cc@{}}
g & i  \\
0 & h
\end{array}
\right)
\end{array}
\right) = 
\left(
\begin{array}{@{}cc@{}}
\left(
\begin{array}{@{}cc@{}}
a & d  \\
0  & g
\end{array}
\right) & \left(
\begin{array}{@{}cc@{}}
c & f  \\
0  & i
\end{array}
\right)  \\
&
\\
  & \left(
\begin{array}{@{}cc@{}}
b & e  \\
0  & h
\end{array}
\right)
\end{array}
\right),
\]
is an automorphism of $\T$. We have that
\begin{align*}
\sigma(p) = 
\sigma
\left(
\begin{array}{@{}cc@{}}
\left(
\begin{array}{@{}cc@{}}
1 & 0  \\
0  & 1
\end{array}
\right) & \left(
\begin{array}{@{}cc@{}}
0 & 0  \\
0  & 0
\end{array}
\right)  \\
&
\\
  & \left(
\begin{array}{@{}cc@{}}
0 & 0  \\
0 & 0
\end{array}
\right)
\end{array}
\right) = 
\left(
\begin{array}{@{}cc@{}}
\left(
\begin{array}{@{}cc@{}}
1 & 0  \\
0  & 0
\end{array}
\right) & \left(
\begin{array}{@{}cc@{}}
0 & 0  \\
0  & 0
\end{array}
\right)  \\
&
\\
  & \left(
\begin{array}{@{}cc@{}}
1 & 0  \\
0 & 0
\end{array}
\right)
\end{array}
\right),
\end{align*}
that is, $\sigma$ does not preserve $p$. 

\begin{proposition}\label{thames}
Let $\T = \Tri(A, M, B)$ be a triangular algebra. 
\begin{enumerate}
\item[\rm (i)] Assume that $A$ and $B$ are nondegenerate and that $M$ is faithful. Then any automorphism of $\T$ can be written as the composition of an inner automorphism with an automorphism preserving the idempotent $p$.
\smallskip
\item[\rm (ii)] Any derivation of $\T$ can be written as the sum of an inner derivation plus a derivation vanishing on the idempotent $p$ (and therefore on $q$).
\end{enumerate}
\end{proposition}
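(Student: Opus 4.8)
The plan is to handle both parts by the same mechanism — compute the image of the idempotent $p$, observe that its ``off-diagonal'' part can be absorbed by an inner map, and peel that map off — executed first at the group level for (i) and then at the infinitesimal level for (ii). In each case the leftover map will fix $p$ (and hence $q=1-p$), and an automorphism (resp. derivation) fixing $p$ automatically stabilizes the Peirce pieces $A=p\T p$, $M=p\T q$, $B=q\T q$, so it is of the form \eqref{aut2} (resp. \eqref{derivation}); this is precisely what ``preserving $p$'' and ``partible'' mean.

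For (i), I would take $\sigma\in\aut(\T)$ and set $e:=\sigma(p)$, $f:=\sigma(q)=1-e$. Since $A$ and $B$ are nondegenerate, \cite[Proposition 4.2]{MGRSO} gives $\sigma(M)=M$. Write the idempotent $e$ as $e=\begin{pmatrix} a_0 & m_0 \\ & b_0\end{pmatrix}$, so $a_0^2=a_0$ and $b_0^2=b_0$. The key step exploits $\sigma(M)=M$: for $m\in M$ one has $\sigma(m)=\sigma(pmq)=e\,\sigma(m)\,f$, while $\sigma(m)\in M$ forces $\sigma(m)=p\,\sigma(m)\,q$; as $\sigma(m)$ ranges over all of $M$ this yields $enf=n$ for every $n\in M$. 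Computing $enf$ in matrix form gives $a_0\,n\,(1-b_0)=n$ for all $n\in M$; left-multiplying by $1-a_0$ and right-multiplying by $b_0$ then produce $(1-a_0)M=0$ and $Mb_0=0$. Faithfulness of $M$ now forces $a_0=1_A$ and $b_0=0$, whence $e=p+m_0$ with $m_0\in M$ (and $m_0^2=0$, since $M^2=0$). Finally $z:=1-m_0\in\T^\times$ (with $z^{-1}=1+m_0$) satisfies $zpz^{-1}=p+m_0=\sigma(p)$, so the inner automorphism $\phi_z\colon t\mapsto ztz^{-1}$ has $\phi_z(p)=\sigma(p)$, $\bar\sigma:=\phi_z^{-1}\sigma$ fixes $p$, and $\sigma=\phi_z\,\bar\sigma$ is the desired factorisation.

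For (ii), no nondegeneracy or faithfulness is needed. Given a derivation $d$, differentiating $p^2=p$ gives $d(p)=d(p)\,p+p\,d(p)$; writing $d(p)=\begin{pmatrix} a_1 & m_1 \\ & b_1\end{pmatrix}$ and comparing Peirce components forces $a_1=0$ and $b_1=0$, so $d(p)=m_1\in M$. I would then take the inner derivation $\delta:=R_{m_1}-L_{m_1}$; since $m_1\in M$ we have $\delta(p)=pm_1-m_1p=m_1=d(p)$. Hence $d_0:=d-\delta$ is a derivation with $d_0(p)=0$, and because every derivation kills the unit ($d_0(1)=2d_0(1)$, so $d_0(1)=0$) we also get $d_0(q)=d_0(1)-d_0(p)=0$. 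Thus $d=\delta+d_0$ writes $d$ as an inner derivation plus a derivation vanishing on $p$ (and $q$).

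The main obstacle is the identification $\sigma(p)=p+m_0$ in (i), i.e. the vanishing $a_0=1_A$, $b_0=0$: this is exactly where both hypotheses enter — nondegeneracy of $A,B$ (through \cite[Proposition 4.2]{MGRSO}) to secure $\sigma(M)=M$, and faithfulness of $M$ to upgrade $(1-a_0)M=0$, $Mb_0=0$ into $a_0=1_A$, $b_0=0$. That both are genuinely required is illustrated by J\o ndrup's example above, where $A=B=T_2(R)$ is degenerate and $\sigma(p)$ is an idempotent not of the form $p+m_0$. Part (ii), by contrast, is unconditional and elementary, the only structural input being that $d(p)$ necessarily lands in $M$.
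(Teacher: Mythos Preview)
Your proof is correct and follows essentially the same approach as the paper: for (i) you show $\sigma(p)=p+m_0$ via $\sigma(M)=M$ and faithfulness, then conjugate by $1-m_0$ (exactly the paper's $t$); for (ii) you show $d(p)\in M$ and subtract the inner derivation $R_{d(p)}-L_{d(p)}=\ad(-d(p))$. The only cosmetic differences are that the paper uses the separate relations $\sigma(p)m=m$ and $m\sigma(p)=0$ rather than your combined $enf=n$, and cites \cite[Proposition~2.2]{FM} for $d(p)\in M$ where you derive it directly from $p^2=p$.
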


\begin{proof}
(i) Let $\sigma$ be an automorphism of $\T$. \cite[Proposition 4.2]{MGRSO} applies to get that $\sigma(M) = M$. Then from $pm = m$ and $mp = 0$ for all $m\in M$, we derive that
\begin{align} 
\sigma(p)m & = m, \quad \forall \, m \in M, \label{sigmap1} \\ 
m \sigma(p) & = 0,\quad \forall \, m \in M. \label{sigmap2}
\end{align} 
Suppose that  
$\sigma(p) = \left(
\begin{array}{@{}cc@{}}
a_p & m_p  \\
  & b_p
\end{array}
\right) \in \T$. Applications of \eqref{sigmap1} and \eqref{sigmap2} yield that $\sigma(p) = \left(
\begin{array}{@{}cc@{}}
1_A & m_p  \\
  & 0
\end{array}
\right)$, since $M$ is faithful. 

Let $t: = \left(
\begin{array}{@{}cc@{}}
1_A & -m_p  \\
  & 1_B
\end{array}
\right) \in \T^\times$. Then $\sigma = \inn(t)\tau$, where $\tau = \inn(t^{-1}) \sigma$ preserves $p$. This finishes the proof of (i). 

\smallskip

(ii) Let $d$ be a derivation of $\T$. An application of \cite[Proposition 2.2]{FM} gives that 
$d(p)= \left(
\begin{array}{@{}cc@{}}
0 & n  \\
  & 0
\end{array}
\right)$, for some $n \in M$. Let $d_1: = d + \ad(d(p))$, then $d = d_1 + \ad(-d(p))$ and $d_1(p) = d(p) + [d(p), p] = 2d(p)p = 0$, 
as desired.
\end{proof}


\begin{corollary}
Let $\T = \Tri(A, M, B)$ be a triangular algebra. Then any derivation $d$ of $\T$ satisfies $d(M)\subseteq M$.
\end{corollary}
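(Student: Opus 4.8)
The plan is to leverage Proposition \ref{thames}(ii), which already guarantees that every derivation $d$ of $\T$ decomposes as $d = d_1 + \ad(-d(p))$, where $d_1$ is a derivation of $\T$ vanishing on the idempotent $p$ (and hence on $q$). First I would observe that it suffices to verify $d(M) \subseteq M$ separately for the inner summand $\ad(-d(p))$ and for the $p$-vanishing derivation $d_1$, since $M = p\T q$ and the claim is linear in $d$.

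For the inner part, I would recall from the proof of Proposition \ref{thames}(ii) that $d(p)$ lies in $M = p\T q$; writing $z := -d(p) \in p\T q$, an element $m \in M = p\T q$ is sent by $\ad(z)$ to $zm - mz$. Since $z, m \in p\T q$ and $p\T q$ is a $(p\T p, q\T q)$-bimodule with $(p\T q)(p \T q) = 0$ inside the triangular structure, both products $zm$ and $mz$ vanish, so $\ad(z)$ kills $M$ entirely; in particular $\ad(z)(M) \subseteq M$ trivially. For the $p$-vanishing part, the key computation is that $d_1$ preserves the Peirce component $p\T q$: for $m = pmq \in M$, linearity and the Leibniz rule give $d_1(m) = d_1(pmq) = d_1(p)mq + p\,d_1(m)\,q + pm\,d_1(q)$, and because $d_1(p) = d_1(q) = 0$ this collapses to $d_1(m) = p\,d_1(m)\,q \in p\T q = M$.

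Assembling the two pieces, $d(M) = d_1(M) + \ad(z)(M) \subseteq M + 0 = M$, which is exactly the assertion. The main obstacle — though a mild one — is making sure the Peirce-component bookkeeping is airtight: one must confirm that a $p$-vanishing (equivalently $q$-vanishing) derivation sends each Peirce summand $p\T p$, $p\T q$, $q\T q$ into itself, which follows from the idempotent relations $p = p^2$, $q = q^2$, $pq = qp = 0$ together with $d_1(p) = d_1(q) = 0$. No nondegeneracy or faithfulness hypotheses on $A$, $B$, or $M$ are needed here, since Proposition \ref{thames}(ii) is stated without them; thus the corollary holds for an arbitrary triangular algebra, as its statement claims.
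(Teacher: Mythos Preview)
Your proof is correct and follows essentially the same route as the paper: decompose $d$ via Proposition~\ref{thames}(ii) into an inner derivation plus a derivation vanishing at $p$, and check that each summand sends $M$ into $M$. Your treatment is simply more explicit --- you spell out the Leibniz computation $d_1(pmq) = p\,d_1(m)\,q$ and observe that $\ad(z)$ actually annihilates $M$ when $z \in M$ --- whereas the paper just asserts the inner case ``clearly follows'' and invokes $M = p\T q$ for the $p$-vanishing case.
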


\begin{proof}
Let $d$ be a derivation of $\T$. If $d$ is inner, then the result clearly follows. If $d(p) = d(q) = 0$, then $d(M) \subseteq M$ since $M = p\T q$. Proposition \ref{thames} (ii) concludes the proof.
\end{proof}

\begin{notation}
{\rm 
For an algebra $\A$, we write $\outder(\A)$ to denote the quotient (Lie) algebra $\der(\A)/\innder(\A)$, where $\innder(\A)$ is the (Lie) ideal 
of $\der(\A)$ of all the inner derivations of $\A$. The elements of $\outder(\A)$ are called {\bf outer derivations}.}
\end{notation}

\begin{corollary}
Let $\T$ be a triangular algebra. Then 
\[
\outder(\T) \cong \der_0(\T)/ \innder_0(\T),
\] 
where $\der_0(\T)$ is the Lie algebra consisting on all the derivations vanishing at $p$ and $\innder_0(\T) = \innder(\T) \cap \der_0(\T)$.
\end{corollary}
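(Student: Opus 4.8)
The plan is to realize the claimed isomorphism as an instance of the second isomorphism theorem for Lie algebras, applied to the decomposition furnished by Proposition \ref{thames} (ii).

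First I would record two structural facts. The set $\innder(\T)$ is an ideal of the Lie algebra $\der(\T)$: for any derivation $d$ and any inner derivation $\ad(a)$ one has $[d, \ad(a)] = \ad(d(a))$, which is again inner. Next, $\der_0(\T)$ is a Lie subalgebra of $\der(\T)$, since if $d_1, d_2$ both vanish at $p$ then $[d_1, d_2](p) = d_1(d_2(p)) - d_2(d_1(p)) = 0$. Consequently $\innder_0(\T) = \innder(\T) \cap \der_0(\T)$ is an ideal of $\der_0(\T)$, being the intersection of an ideal with a subalgebra.

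The key input is Proposition \ref{thames} (ii), which states precisely that every derivation of $\T$ is a sum of an inner derivation and a derivation vanishing at $p$; in other words $\der(\T) = \innder(\T) + \der_0(\T)$. With $\innder(\T)$ an ideal and $\der_0(\T)$ a subalgebra whose sum is all of $\der(\T)$, the second isomorphism theorem gives
\[
\outder(\T) = \der(\T)/\innder(\T) = \big(\innder(\T) + \der_0(\T)\big)/\innder(\T) \cong \der_0(\T)/\big(\der_0(\T) \cap \innder(\T)\big),
\]
and the right-hand denominator is exactly $\innder_0(\T)$ by definition, yielding the stated isomorphism. Concretely, the isomorphism is the map induced on quotients by the composite of the inclusion $\der_0(\T) \hookrightarrow \der(\T)$ with the canonical projection onto $\outder(\T)$: this composite is surjective precisely because of the decomposition above, and its kernel is $\der_0(\T) \cap \innder(\T) = \innder_0(\T)$.

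Since everything takes place over the ground ring $R$ rather than a field, the only point requiring (routine) care is that the second isomorphism theorem remains valid for Lie algebras over a commutative ring; this is standard, as the argument above constructs the isomorphism explicitly and uses no division. I do not anticipate a genuine obstacle here: the entire content of the corollary is concentrated in the decomposition $\der(\T) = \innder(\T) + \der_0(\T)$ supplied by Proposition \ref{thames} (ii), and the remainder is a formal application of the isomorphism theorem.
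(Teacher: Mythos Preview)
Your proof is correct and is precisely the argument the paper has in mind: the corollary is stated in the paper without proof, as an immediate consequence of Proposition \ref{thames} (ii), and your application of the second isomorphism theorem to the decomposition $\der(\T) = \innder(\T) + \der_0(\T)$ is exactly the intended (and essentially the only) route.
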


\begin{corollary}
Let $\T = \Tri(A, M, B)$ be a triangular algebra.
\begin{itemize}
\item[\rm (i)] Assume that $A$ and $B$ are nondegenerate and that $M$ is faithful as a left $A$-module and as a right $B$-module. Then every ternary automorphism of $\T$ can be written as the composition of an inner ternary automorphism and a ternary automorphism of the form $(\sigma, \sigma, \sigma)$, where $\sigma$ is an automorphism of $\T$ preserving $p$.

\smallskip

\item[\rm (ii)] Any ternary derivation of $\T$ can be expressed as the sum of an inner ternary derivation plus a ternary derivation of the form $(d, d, d)$, where $d$ is a derivation of $\T$ such that $d(p) = 0$.
\end{itemize}
\end{corollary}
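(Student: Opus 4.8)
The plan is to combine the decomposition results for (ternary) automorphisms and (ternary) derivations proved earlier in the section with the structural splitting of Proposition \ref{thames}. Both parts should follow by transporting the ``inner plus idempotent-preserving'' decomposition through the explicit parametrizations of Theorem \ref{Candidothm1} and Corollary \ref{Candidothm2}.

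For part (ii), I would start from an arbitrary ternary derivation of $\T$. By Corollary \ref{Candidothm2} it has the form $(d + L_x + R_y, \, d + L_x, \, d + R_y)$ for a unique derivation $d$ of $\T$ and unique $x, y \in \T$. Now apply Proposition \ref{thames} (ii) to write $d = d_0 + \ad(s)$, where $d_0$ is a derivation with $d_0(p) = 0$ and $\ad(s)$ is inner (concretely $s = -d(p)$, so that $\ad(s) = \ad(-d(p))$). Substituting gives
\[
(d + L_x + R_y, \, d + L_x, \, d + R_y) = (d_0 + L_x + R_y + \ad(s), \, d_0 + L_x + \ad(s), \, d_0 + R_y + \ad(s)).
\]
The idea is then to absorb $L_x$, $R_y$ and the inner part $\ad(s) = R_s - L_s$ into a single inner ternary derivation, leaving the diagonal piece $(d_0, d_0, d_0)$. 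Writing $\ad(s) = R_s - L_s$, the first component becomes $L_{x-s} + R_{y+s}$, the second $L_{x-s} + R_s$, and the third $-L_s + R_{y+s}$; by Definition \ref{innerter} (with $a = x-s$, $b = y+s$, $c = s$) this triple is exactly an inner ternary derivation. Hence $(d_1, d_2, d_3) = (L_a + R_b, L_a + R_c, -L_c + R_b) + (d_0, d_0, d_0)$ with $d_0(p) = 0$, which is the desired decomposition.

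For part (i), the argument is the multiplicative analogue. Given a ternary automorphism, Theorem \ref{Candidothm1} writes it as $(R_y L_x \sigma, \, L_x \sigma, \, R_y \sigma)$ with $\sigma \in \aut(\T)$ and $x, y \in \T^\times$. Under the stated nondegeneracy and faithfulness hypotheses, Proposition \ref{thames} (i) factors $\sigma = \inn(t)\tau$, where $\tau$ preserves $p$ and $t \in \T^\times$ is the explicit element of the proof. Substituting $\sigma = \inn(t)\tau$ and using that $\inn(t) = L_t R_{t^{-1}}$ one rewrites the triple so that $L_x$, $R_y$ and the conjugation factor $\inn(t)$ collect into an inner ternary automorphism in the sense of Definition \ref{innerautdef}, while the remaining factor is the diagonal ternary automorphism $(\tau, \tau, \tau)$ with $\tau$ preserving $p$.

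The main obstacle is bookkeeping in part (i): the composition law in $\teraut(\T)$ is componentwise composition, not addition, so the left and right multiplications do not commute freely, and the conjugation $\inn(t)$ must be moved past $L_x$ and $R_y$ while tracking how the automorphism $\tau$ twists the multiplier elements (this is the same twisting already visible in the group law $(\sigma,x,y)(\tau,z,t) = (\sigma\tau, x\sigma(z), \sigma(t)y)$ recorded in the proof of Corollary \ref{Candidothm2}). The correct invertible multipliers for the inner ternary automorphism must therefore be chosen as suitable $\tau$-translates of $x$, $y$ and $t$; once they are identified, verifying that the factorization lands in $\innteraut(\T)$ is routine from Definition \ref{innerautdef}. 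Part (ii) is easier precisely because derivations add, so no such twisting appears.
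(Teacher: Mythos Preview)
Your proposal is correct and follows exactly the route the paper takes: the paper's proof simply cites Theorem \ref{Candidothm1} together with Proposition \ref{thames} (i) for part (i), and Corollary \ref{Candidothm2} together with Proposition \ref{thames} (ii) for part (ii). You have supplied the explicit bookkeeping (in particular the identification $a=x-s$, $b=y+s$, $c=s$ in Definition \ref{innerter}, and the analogous twisting for the multiplicative case) that the paper leaves to the reader.
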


\begin{proof}
(i) follows from Theorem \ref{Candidothm1} and Proposition \ref{thames} (i). To prove (ii) 
apply Corollary \ref{Candidothm2} and Proposition \ref{thames} (ii).
\end{proof}

Let $\T = \Tri(A, M, B)$ be triangular algebra.  We write $\aut_0(\T)$ to denote the group of automorphisms of $\T$ preserving $p$. Using the Peirce decomposition of $\T$ associated to $p$ one can easily check that $\aut_0(\T)$ contains all the automorphisms $\sigma$ of $\T$ preserving $A$, $M$ and $B$, that is, $\sigma(A) = A$, $\sigma(M) = M$, $\sigma(B) = B$. The importance of these latter automorphisms together with Proposition \ref{thames} (i) have motivated us to describe $\aut_0(\T)$.

\begin{proposition}
Let $\T = \Tri(A, M, B)$ be triangular algebra. Then 
\[
\aut_0(\T)/ \mathcal{K} \cong \GL_{A, B}(M),
\]
where $\GL_{A, B}(M)$ is the following subgroup of $\GL(M)$
\begin{align*}
\GL_{A, B}(M) = \big \{& T \in \GL(M) | \, \,  \exists \, \alpha_T \in \aut(A), \beta_T \in \aut(B): 
\\
& TL_aR_b = L_{\alpha_T(a)}R_{\beta_T(b)}T, \, \, \forall \, a \in A, b \in B \big \},
\end{align*}
and 
$\mathcal{K} = \big \{ (\alpha, \beta)\in \aut(A) \times \aut(B) | \, \, (\Imm(\alpha) - 1_A)M = 0 = M(\Imm (\beta) - 1_B)\big \}.$
\end{proposition}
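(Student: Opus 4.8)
The plan is to realize the claimed isomorphism as the first isomorphism theorem applied to the ``restriction to $M$'' homomorphism $\Phi\colon \aut_0(\T)\to\GL_{A,B}(M)$. First I would observe that every $\sigma\in\aut_0(\T)$ in fact preserves the whole Peirce decomposition: from $\sigma(p)=p$ we get $\sigma(q)=\sigma(1-p)=q$, whence $\sigma(p\T p)=\sigma(p)\sigma(\T)\sigma(p)=p\T p$ and likewise $\sigma(p\T q)=p\T q$, $\sigma(q\T q)=q\T q$. Thus $\sigma$ has the form \eqref{aut2}, determined by $f_\sigma\in\aut(A)$, $g_\sigma\in\aut(B)$ and the bijective map $\nu_\sigma\in\GL(M)$, and the compatibility relations $\nu_\sigma(am)=f_\sigma(a)\nu_\sigma(m)$, $\nu_\sigma(mb)=\nu_\sigma(m)g_\sigma(b)$ combine into $\nu_\sigma L_aR_b=L_{f_\sigma(a)}R_{g_\sigma(b)}\nu_\sigma$ for all $a\in A$, $b\in B$. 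Hence $\nu_\sigma\in\GL_{A,B}(M)$ (with $\alpha_{\nu_\sigma}=f_\sigma$, $\beta_{\nu_\sigma}=g_\sigma$), and $\Phi(\sigma):=\nu_\sigma$ is a group homomorphism, since composition of automorphisms restricts on $M$ to composition of the $\nu$'s.

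Next I would prove surjectivity of $\Phi$. Given $T\in\GL_{A,B}(M)$, I would choose witnessing automorphisms $\alpha_T\in\aut(A)$, $\beta_T\in\aut(B)$ and define the $R$-linear map $\sigma_T\colon\T\to\T$ by $\sigma_T(a,m,b)=(\alpha_T(a),T(m),\beta_T(b))$ in Peirce coordinates. Specializing the defining relation $TL_aR_b=L_{\alpha_T(a)}R_{\beta_T(b)}T$ at $b=1_B$ gives $T(am)=\alpha_T(a)T(m)$, and at $a=1_A$ gives $T(mb)=T(m)\beta_T(b)$; these are exactly the identities needed so that the off-diagonal entry of $\sigma_T\big((a,m,b)(a',m',b')\big)$ matches that of $\sigma_T(a,m,b)\,\sigma_T(a',m',b')$, while the diagonal entries agree because $\alpha_T,\beta_T$ are homomorphisms. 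Thus $\sigma_T$ is multiplicative; it is bijective because $\alpha_T,T,\beta_T$ are, and $\sigma_T(p)=p$. Hence $\sigma_T\in\aut_0(\T)$ and $\Phi(\sigma_T)=T$.

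Finally I would compute $\ker\Phi$. If $\nu_\sigma=\Id_M$, the compatibility relations degenerate to $(f_\sigma(a)-a)m=0$ and $m(g_\sigma(b)-b)=0$ for all $a,b,m$, i.e.\ the pair $(f_\sigma,g_\sigma)$ satisfies the defining conditions of $\mathcal{K}$; conversely every $(\alpha,\beta)\in\mathcal{K}$ yields, via $(a,m,b)\mapsto(\alpha(a),m,\beta(b))$, an element of $\aut_0(\T)$ lying in $\ker\Phi$, since those annihilator conditions are precisely what make this map multiplicative. Because $\sigma$ is recovered from $(f_\sigma,g_\sigma)$ and composition corresponds to componentwise composition, $\sigma\mapsto(f_\sigma,g_\sigma)$ is a group isomorphism $\ker\Phi\xrightarrow{\sim}\mathcal{K}$; in particular $\mathcal{K}$, so identified, is normal in $\aut_0(\T)$. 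The first isomorphism theorem then gives $\aut_0(\T)/\mathcal{K}\cong\GL_{A,B}(M)$.

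I expect the main obstacle to be the bookkeeping around the quotient by $\mathcal{K}$: the witnessing automorphisms $\alpha_T,\beta_T$ attached to a given $T$ are not unique but are pinned down only modulo the left, respectively right, annihilator of $M$, and this indeterminacy is exactly what $\mathcal{K}$ records. Making this precise — that $\ker\Phi$ is identified with $\mathcal{K}$ as a group, not merely as a set — is the delicate point; by contrast, the multiplicativity verification in the surjectivity step is routine once the two specializations of the defining relation of $\GL_{A,B}(M)$ are in hand.
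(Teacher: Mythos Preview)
Your proposal is correct and follows essentially the same route as the paper: define the restriction-to-$M$ homomorphism $\aut_0(\T)\to\GL_{A,B}(M)$, identify its kernel with $\mathcal{K}$, and invoke the first isomorphism theorem. Your write-up is in fact more complete than the paper's, which leaves the surjectivity of $\Psi$ implicit, whereas you construct $\sigma_T$ explicitly from a given $T\in\GL_{A,B}(M)$ together with a choice of witnessing $(\alpha_T,\beta_T)$.
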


\begin{proof}
It is straightforward to check that $\GL_{A, B}(M)$ is indeed a subgroup of $\GL(M)$. Define $\Psi: \aut_0(\T) \to \GL_{A, B}(M)$ by $\Psi(\sigma) = \sigma|_M$, for all $\sigma \in \aut_0(\T)$. Note that $\Psi$ is well defined since every element of $\aut_0(\T)$ is $M$-preserving, and it is clearly a group homomorphism. We claim that its kernel is isomorphic to $\mathcal{K}$. In fact, let $\sigma \in \ker \Psi$. Then, $\sigma(M) = 0$ and so
$\sigma
\left(
\begin{array}{@{}cc@{}}
a & m  \\
0  & b
\end{array}
\right) = 
\left(
\begin{array}{@{}cc@{}}
\sigma_A(a) & 0  \\
0  & \sigma_B(b)
\end{array}
\right)$, for some $\sigma_A \in \aut(A)$ and $\sigma_B \in \aut(B)$. Trivially, $(\Imm(\sigma_A) - 1_A)M = 0 = M(\Imm (\sigma_B) - 1_B)$. The map $\ker \Psi \to \mathcal{K}$ given by $\sigma \mapsto (\sigma_A, \sigma_B)$ is the desired isomorphism. 
\end{proof}

\begin{corollary} \label{aut0}
Let $\T = \Tri(A, M, B)$ be triangular algebra with $M$ faithful. Then 
$\aut_0(\T) \cong \GL_{A, B}(M)$. 
\end{corollary}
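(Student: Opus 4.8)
The plan is to recognize that Corollary \ref{aut0} is the special case of the preceding Proposition in which the kernel $\mathcal{K}$ collapses to the trivial group, so the surjection $\Psi\colon \aut_0(\T)\to \GL_{A,B}(M)$ becomes an isomorphism. Concretely, I would first invoke the Proposition to obtain the exact sequence $1\to \mathcal{K}\to \aut_0(\T)\xrightarrow{\Psi}\GL_{A,B}(M)\to 1$, where $\Psi(\sigma)=\sigma|_M$; it then suffices to show that the faithfulness of $M$ forces $\mathcal{K}=\{(\mathrm{Id}_A,\mathrm{Id}_B)\}$.

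The key step is unwinding the defining condition of $\mathcal{K}$ under faithfulness. Recall
\[
\mathcal{K} = \big\{(\alpha,\beta)\in \aut(A)\times\aut(B)\mid (\Imm(\alpha)-1_A)M = 0 = M(\Imm(\beta)-1_B)\big\}.
\]
Take $(\alpha,\beta)\in\mathcal{K}$. The left-hand relation says $(\alpha(a)-a)m = 0$ for all $a\in A$ and all $m\in M$; since $M$ is faithful as a left $A$-module, the element $\alpha(a)-a$ must act as zero on $M$, hence equals $0$, giving $\alpha=\mathrm{Id}_A$. Symmetrically, $m(\beta(b)-b)=0$ for all $b\in B$ and $m\in M$ together with faithfulness of $M$ as a right $B$-module yields $\beta=\mathrm{Id}_B$. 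Therefore $\mathcal{K}$ is trivial.

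With $\ker\Psi\cong\mathcal{K}=\{1\}$, the homomorphism $\Psi$ is injective; combined with its surjectivity (already established in the proof of the Proposition), $\Psi$ is an isomorphism, and $\aut_0(\T)\cong\GL_{A,B}(M)$ as claimed. I do not anticipate a genuine obstacle here: the only mild subtlety is being explicit that ``faithful'' is being used on \emph{both} sides (as a left $A$-module and as a right $B$-module), which is why the statement carries that hypothesis; apart from clarifying which faithfulness feeds which half of the computation, the argument is a direct corollary of the Proposition.
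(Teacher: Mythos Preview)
Your proposal is correct and is precisely the argument the paper has in mind: the Corollary is stated without proof because it is the immediate specialization of the preceding Proposition once one observes that faithfulness of $M$ (on both sides) forces $\mathcal{K}$ to be trivial, exactly as you argue. The only cosmetic point is that surjectivity of $\Psi$ is not spelled out in the paper's proof of the Proposition either---it is implicit in the isomorphism $\aut_0(\T)/\mathcal{K}\cong\GL_{A,B}(M)$---so you may wish to remark on it (every $T\in\GL_{A,B}(M)$ with witnesses $\alpha_T,\beta_T$ lifts to the automorphism $\begin{pmatrix}a&m\\&b\end{pmatrix}\mapsto\begin{pmatrix}\alpha_T(a)&T(m)\\&\beta_T(b)\end{pmatrix}$).
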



\begin{theorem} \label{findingaut0}
Let $\T = \Tri(A, M, B)$ be triangular algebra with $M$ faithful.
Let $\mathcal{H }= \hom_R(A \otimes M \otimes B, M)$ be the set of all $R$-modules homomorphisms from $A \otimes M \otimes B$ to $M$. Define $f: \GL(M) \to \mathcal{H}$ by $f(T) = T\mu (\Id_A \otimes T^{-1} \otimes \Id_B)$ (for all $T \in \GL(M)$) and $g: \aut(A \times B) \to \mathcal{H}$ by $g(\alpha, \beta) = \mu(\alpha \otimes \Id_M \otimes \beta)$ (for all $(\alpha, \beta) \in \aut(A \times B)$). Then: 
\begin{equation} \label{isoaut0}
\aut_0(\T) \cong \GL(M) \times_\mathcal{H} \aut(A \times B),
\end{equation} 
that is, $\aut_0(\T)$ is the pullback in $\set$ of the following pullback square:
\begin{equation} \label{pullaut0}
\xygraph{ !{<0cm,0cm>;<1.5cm,0cm>:<0cm,1.2cm>::} 
!{(.2,-.3)}*+{\lrcorner}
!{(0,0) }*+{GL(M)\times_{\mathcal{H}} \aut(A\times B)}="a"
!{(3,0) }*+{\aut(A\times B)}="b" 
!{(0,-1.3)}*+{\GL(M)}="c" 
!{(3,-1.3)}*+{\mathcal{H}}="d"
!{(1.8,.1 )}*+{^{\pi_2}}="u"
"a":"b" "a":_{\pi_1}"c"
"b":^g"d" "c":_f"d"
}
\end{equation}
\end{theorem}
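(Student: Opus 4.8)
The plan is to construct an explicit bijection $\Phi$ from $\aut_0(\T)$ onto the pullback set $\GL(M)\times_{\mathcal H}\aut(A\times B)$ which is compatible with the projections $\pi_1,\pi_2$, so that $\aut_0(\T)$, equipped with the restriction maps, inherits the universal property of the pullback in $\set$. The first observation is that any $\sigma\in\aut_0(\T)$ fixes $1$ and $p$, hence also $q=1-p$; consequently $\sigma(p\T q)=p\,\sigma(\T)\,q=p\T q$, and likewise $\sigma$ preserves $p\T p\cong A$ and $q\T q\cong B$. Thus every such $\sigma$ has the block form \eqref{aut2}, producing $f_\sigma\in\aut(A)$, $g_\sigma\in\aut(B)$ and $\nu_\sigma=\sigma|_M\in\GL(M)$ subject to the compatibilities $\nu_\sigma(am)=f_\sigma(a)\nu_\sigma(m)$ and $\nu_\sigma(mb)=\nu_\sigma(m)g_\sigma(b)$. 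I would set $\Phi(\sigma):=(\nu_\sigma,(f_\sigma,g_\sigma))$; following the statement, we regard elements of $\aut(A\times B)$ as pairs $(\alpha,\beta)$ with $\alpha\in\aut(A)$, $\beta\in\aut(B)$.

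First I would verify that $\Phi$ lands in the pullback, i.e. that $f(\nu_\sigma)=g(f_\sigma,g_\sigma)$ in $\mathcal H$. Writing $\mu(a\otimes m\otimes b)=amb$ and evaluating on a generator, the left-hand side is $\nu_\sigma\big(a\,\nu_\sigma^{-1}(m)\,b\big)=f_\sigma(a)\,m\,g_\sigma(b)$ by the two compatibilities, while the right-hand side is $f_\sigma(a)\,m\,g_\sigma(b)$ directly; hence they coincide. This step uses nothing beyond the block description of $\sigma$.

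Next I would show $\Phi$ is a bijection. Injectivity is immediate, since $\sigma$ is determined by its restrictions to the three Peirce components. For surjectivity, take $(T,(\alpha,\beta))$ in the pullback; evaluating $f(T)=g(\alpha,\beta)$ on $a\otimes T(m)\otimes b$ (legitimate as $T$ is bijective) rearranges to $T(amb)=\alpha(a)\,T(m)\,\beta(b)$, that is, $TL_aR_b=L_{\alpha(a)}R_{\beta(b)}T$ for all $a,b$. Specialising $b=1_B$ and using $\beta(1_B)=1_B$ gives $TL_a=L_{\alpha(a)}T$, and $a=1_A$ with $\alpha(1_A)=1_A$ gives $TR_b=R_{\beta(b)}T$. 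I would then define $\sigma$ block-diagonally by $\sigma\left(\begin{smallmatrix}a&m\\&b\end{smallmatrix}\right)=\left(\begin{smallmatrix}\alpha(a)&T(m)\\&\beta(b)\end{smallmatrix}\right)$ and check it is an algebra automorphism fixing $p$: linearity and bijectivity are clear, and multiplicativity reduces, on the off-diagonal entry, exactly to the specialised identities $T(am')=\alpha(a)T(m')$ and $T(mb')=T(m)\beta(b')$. Then $\sigma\in\aut_0(\T)$ and $\Phi(\sigma)=(T,(\alpha,\beta))$. I expect this multiplicativity check---together with the correct translation of $f(T)=g(\alpha,\beta)$ into the operator identities via the substitution $m\mapsto T(m)$---to be the crux of the argument.

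Finally, since $\pi_1\Phi(\sigma)=\nu_\sigma=\sigma|_M$ and $\pi_2\Phi(\sigma)=(f_\sigma,g_\sigma)$, the bijection $\Phi$ transports the universal property, which establishes \eqref{isoaut0} and realises $\aut_0(\T)$ as the pullback \eqref{pullaut0}. A short computation on block forms gives $\Phi(\sigma\sigma')=\Phi(\sigma)\Phi(\sigma')$ for the componentwise product, so $\Phi$ is in fact a group isomorphism. Note that this direct construction uses only the block decomposition of $p$-preserving automorphisms; faithfulness of $M$ is not needed for the bijection itself, but it is precisely what forces the witnesses $\alpha,\beta$ of a given $T$ to be unique, thereby making $\pi_1$ injective and linking the pullback to $\GL_{A,B}(M)\cong\aut_0(\T)$ of Corollary \ref{aut0}, which provides an alternative route.
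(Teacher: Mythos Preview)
Your argument is correct. The key computations---translating $f(T)=g(\alpha,\beta)$ into $T(amb)=\alpha(a)T(m)\beta(b)$ via the substitution $m\mapsto T(m)$, and then specialising to $b=1_B$ or $a=1_A$ to recover the two separate compatibilities---are exactly right, and the block-form reconstruction of $\sigma$ goes through as you describe.

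Your route differs from the paper's. The paper factors the isomorphism through the intermediate group $\GL_{A,B}(M)$: it first invokes Lemma~\ref{req} to give the pullback a group structure, then checks $\GL(M)\times_{\mathcal H}\aut(A\times B)\cong\GL_{A,B}(M)$ via $T\mapsto(T,(\alpha_T,\beta_T))$, and finally quotes Corollary~\ref{aut0} (which is where faithfulness of $M$ enters, to kill the kernel $\mathcal K$) to obtain $\aut_0(\T)\cong\GL_{A,B}(M)$. You instead build the bijection $\Phi\colon\aut_0(\T)\to\GL(M)\times_{\mathcal H}\aut(A\times B)$ directly from the Peirce decomposition, bypassing $\GL_{A,B}(M)$ entirely. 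Your observation that faithfulness is not used in the bijection itself is accurate and slightly sharpens the statement; in the paper's approach, faithfulness is genuinely consumed by Corollary~\ref{aut0}, whereas in yours it only serves, as you note, to make $\pi_1$ injective and recover the link with $\GL_{A,B}(M)$. The paper's route has the advantage of situating the result within the sequence of structural lemmas already established (and feeds more naturally into the Lie-functor corollary that follows), while yours is more self-contained.
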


\begin{proof}
We identify $\aut(A \times B)$ with $\aut(A) \times \aut(B)$. In $\set$, consider the pullback 
\[
\GL(M) {\times}_\mathcal{H} \aut(A \times B) {=} \big \{ (T, (\alpha, \beta)) | \, \, T\mu (\Id_A \otimes T^{-1} \otimes \Id_B) {=} \mu(\alpha \otimes \Id_M \otimes \beta) \big \} 
\]
Apply Lemma \ref{req} to \eqref{pullaut0} to obtain that $\GL(M) {\times}_\mathcal{H} \aut(A \times B)$ is a group. We claim that $\GL(M) {\times}_\mathcal{H} \aut(A \times B)$ is isomorphic to $\GL_{A, B}(M)$. In fact, notice
that 
$(T,(\a,\b))\in \GL(M) {\times}_\mathcal{H} \aut(A \times B)$
if and only if $T(amb) = \alpha(a) T(m) \beta(b)$ for all $a \in A$, $m \in M$ and $b \in B$. The map $\GL_{A, B}(M) \to \GL(M) {\times}_\mathcal{H} \aut(A \times B)$ given by $T \mapsto (T,(\alpha_T, \beta_T))$, for $T \in \GL_{A, B}(M)$, is the desired  isomorphism. The result now follows from Corollary \ref{aut0}.
\end{proof}

\begin{corollary}
Let $\T = \Tri(A, M, B)$ be triangular algebra with $M$ faithful. Then $\der_0(\T) \cong \gl(M) \times_{\mathcal{H}} \der(A \times B)$, where $\gl(M)$ refers to the $R$-module consisting on all $R$-endomorphims of $M$ and $\mathcal{H}$ is like in Theorem \ref{findingaut0}. 
\end{corollary}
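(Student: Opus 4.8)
The plan is to obtain the statement as the infinitesimal (Lie-functor) counterpart of the group isomorphism $\aut_0(\T) \cong \GL(M) \times_{\mathcal{H}} \aut(A \times B)$ established in Theorem \ref{findingaut0}, exactly in the spirit of Proposition \ref{squareprop}. First I would upgrade the three objects in the pullback square \eqref{pullaut0} to group functors over $\alg_R$: the functor $\affGL(M)$ already exists, the functor $S \mapsto \aut_S\big((A\times B)_S\big)$ plays the role of $\affaut(A\times B)$, and $\mathcal{H} = \hom_R(A\otimes M\otimes B, M)$ is an $R$-module so it defines a set functor $\affZ$ as in Definition \ref{functorpullback}. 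The two morphisms $f$ and $g$ of Theorem \ref{findingaut0}, being defined by fixed multiplication data $\mu$ and by functorial tensor constructions, are natural transformations between these functors; I would check that each is natural in $S$, which is routine since both are built from $\mu$ and identity maps that are preserved under scalar extension.

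Next I would verify that $f$ and $g$ satisfy the hypotheses of Proposition \ref{squareprop}, namely that $f_S$ and $g_S$ meet the conditions of Lemma \ref{req} for every $S \in \alg_R$. This is the same verification already implicit in Theorem \ref{findingaut0} (where Lemma \ref{req} was invoked for the base ring), now applied uniformly over all scalar extensions; the compatibility identity $T(amb) = \alpha(a)T(m)\beta(b)$ is stable under tensoring with $S$, so the multiplicativity and inverse conditions transfer without change. Having confirmed this, Proposition \ref{squareprop} immediately yields that $\affGL(M) \times_{\mathbf{Z}} \affaut(A\times B)$ is a group functor whose Lie algebra is the pullback $\lie(\affGL(M)) \times_{\mathbf{Z}} \lie(\affaut(A\times B))$ of the differentiated square, with differentials $df$ and $dg$ at the identity.

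It then remains to identify the three corners of the differentiated pullback. By the discussion preceding Lemma \ref{req}, $\lie(\affGL(M)) = \gl(M)$, the $R$-module of all $R$-endomorphisms of $M$, and $\lie(\affaut(A\times B)) = \der(A\times B)$; the target remains the $R$-module $\mathcal{H}$. Moreover, since $\affGL(M) \times_{\mathbf{Z}} \affaut(A\times B) \cong \affaut_0(\T)$ (the functorial version of Theorem \ref{findingaut0}), applying the Lie functor to both sides and using $\lie(\affaut_0(\T)) = \der_0(\T)$ gives the desired isomorphism $\der_0(\T) \cong \gl(M) \times_{\mathcal{H}} \der(A\times B)$. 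I expect the main obstacle to be purely bookkeeping: pinning down the explicit forms of the differentials $df$ and $dg$ so that the pullback on the Lie side genuinely describes $\der_0(\T)$, i.e. confirming that a derivation vanishing at $p$ corresponds under differentiation to a pair $(D, (\delta, \mu))$ with $D \in \gl(M)$ and $(\delta, \mu) \in \der(A\times B)$ satisfying the linearized compatibility $D(amb) = \delta(a)m b + a D(m) b + a m \mu(b)$. Once this linearized identity is matched against condition defining the pullback, the result follows directly from Proposition \ref{squareprop}.
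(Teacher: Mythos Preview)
Your proposal is correct and follows essentially the same approach as the paper: upgrade the pullback square of Theorem \ref{findingaut0} to group functors, define the subfunctor $\affaut_0(\T)$ of $\affaut(\T)$, and apply Proposition \ref{squareprop} to pass to Lie algebras. The paper's proof is terser, simply citing Proposition \ref{squareprop} and Theorem \ref{findingaut0}, while you spell out the identifications $\lie(\affGL(M)) = \gl(M)$, $\lie(\affaut(A\times B)) = \der(A\times B)$ and the linearized compatibility condition, but the underlying strategy is identical.
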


\begin{proof}
For $\A \in \alg_R$, let $\affaut_0(\T)(\A)$ be the set of elements $\sigma \in \aut(\A_R)$ such that $\sigma(p \otimes 1) = p \otimes  1$. It is straightforward to check that $\affaut_0(\T)(\A) \in \grp$, and so $\affaut_0(\T)$ is a subfunctor of $\affaut(\T)$. 
Apply Proposition \ref{squareprop} and Theorem \ref{findingaut0} to conclude the proof.
\end{proof}

\begin{remark}
The inner automorphisms of $\aut_0(\T)$, via the isomorphism \eqref{isoaut0}, correspond to pairs $\big(T,(\inn(x),\inn(y))\big)$, for some $x\in A^\times$ and $y\in B^\times$ such that $T(m) = xmy^{-1}$ for any $m\in M$. 
\end{remark}

The group $\G$ consisting of all $T\in\GL(M)$ of the form $T(m) = xmy$, for $x\in A^\times$, $y\in B^\times$ is indeed isomorphic
to a factor group of $A^\times\times (B^\times)^{\hbox{\tiny op}}$. To show this, let $E:=\End_R(M)$ and consider the pullback square 
$$\xygraph{ !{<0cm,0cm>;<1cm,0cm>:<0cm,1cm>::} 
!{(.2,-.4)}*+{\lrcorner}
!{(0,0) }*+{A^\times\times_E (B^\times)^{\hbox{\tiny op}}}="a" 
!{(2.3,0) }*+{(B^\times)^{\hbox{\tiny op}}}="b" 
!{(0,-1.7) }*+{A^\times}="c" 
!{(2.3,-1.7)}*+{E}="d" 
"a":"b" "a":"c"
"b":^R"d"
"c":_{L}"d"}$$
where $L(a)(m) = am$ and $R(b)(m) = mb$ for all $m \in M$. Assume that $M$ is faithful, then $L_xR_{y^{-1}}= \mathrm{Id}_M$ implies $x \in Z(A)$ and $y \in Z(B)$. This proves that 
$$
A^\times\times_E(B^\times)^{\hbox{\tiny op}} = Z(A)^\times\times_E Z(B^\times)^{\hbox{\tiny op}},
$$
and $\G \cong (A^\times\times(B^\times)^{\hbox{\tiny op}})/(Z(A^\times)\times_E Z(B^\times)^{\hbox{\tiny op}})$, as desired.   
Moreover, the map

\noindent $\Phi: A^\times\times (B^\times)^{\hbox{\tiny op}}\to \innaut_0(\T)$ given by $\Phi(x,y) = \big(L_xR_{y^{-1}},(\inn(x),\inn(y))\big)$ is a group epimorphism with kernel $Z(A^\times)\times_E Z(B^\times)^{\hbox{\tiny op}}$. Thus, $\G \cong\innaut_0(\T)$.

%

Using group functors, this result gives us the existence of an 
exact sequence 
\begin{equation}\label{sequence}
1\to \affGL\nolimits_1(A)\times_E\affGL\nolimits_1(B^{\hbox{\tiny op}})\to 
\affGL\nolimits_1(A)\times\affGL\nolimits_1(B^{\hbox{\tiny op}})\to\affinnaut\nolimits_0(\T)\to 1.
\end{equation}

Our last goal is to describe $\innder_0(\T)$ in terms of a certain pullback in $\set$.

\begin{lemma}
The center $Z(\T)$ of a triangular algebra $\T = \Tri(A, M, B)$ is the pullback of the following pullback square:
$$
\xygraph{ !{<0cm,0cm>;<1cm,0cm>:<0cm,1cm>::} 
!{(.2,-.4)}*+{\lrcorner}
!{(0,0) }*+{Z(A)\times_E Z(B)}="a" 
!{(2.5,0) }*+{Z(B)}="b" 
!{(0,-1.7) }*+{Z(A)}="c" 
!{(2.5,-1.7)}*+{E}="d" 
"a":"b" "a":"c"
"b":^R"d"
"c":_{L}"d"
}
$$
where $Z(A)$ and $Z(B)$ are the centers of $A$ and $B$, respectively, $E:= \End_R(M)$ and $L: Z(A) \to E$ and $R: Z(B) \to E$ are given by $L(a) := L_a$ and $R(b) := R_b$, for all $a \in A$, $b \in B$. 
\end{lemma}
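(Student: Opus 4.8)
The plan is to compute $Z(\T)$ directly from the Peirce decomposition recalled in Notation \ref{notation} and then to recognise the answer as the prescribed pullback. I write a general element of $\T$ as $z = \left(\begin{smallmatrix} a & m \\ & b \end{smallmatrix}\right)$ with $a\in A$, $m\in M$, $b\in B$, and the task is to determine exactly when $z$ commutes with every element of $\T$.

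First I would test commutativity of $z$ against the idempotent $p$. A one-line computation gives $zp = \left(\begin{smallmatrix} a & 0 \\ & 0 \end{smallmatrix}\right)$ and $pz = \left(\begin{smallmatrix} a & m \\ & 0 \end{smallmatrix}\right)$, so that $zp=pz$ already forces $m=0$. Hence every central element is diagonal and the module component of $Z(\T)$ vanishes. With $m=0$, I then test $z$ against the subalgebras $p\T p\cong A$ and $q\T q\cong B$, obtaining $a\in Z(A)$ and $b\in Z(B)$; and finally against an arbitrary $\left(\begin{smallmatrix} 0 & m' \\ & 0 \end{smallmatrix}\right)\in p\T q\cong M$, which yields $am' = m'b$ for all $m'\in M$, i.e. $L_a = R_b$ as elements of $E = \End_R(M)$. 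Since these three conditions are visibly sufficient for centrality, I conclude
\[
Z(\T) = \left\{ \left(\begin{smallmatrix} a & 0 \\ & b \end{smallmatrix}\right) \;\middle|\; a\in Z(A),\ b\in Z(B),\ L_a = R_b \text{ on } M \right\}.
\]

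To finish, I identify $\left(\begin{smallmatrix} a & 0 \\ & b \end{smallmatrix}\right)$ with the pair $(a,b)$. By definition the pullback in $\set$ of $Z(A)\xrightarrow{L} E \xleftarrow{R} Z(B)$ is $\{(a,b)\in Z(A)\times Z(B) \mid L_a = R_b\}$, so this identification is precisely the claimed isomorphism $Z(\T)\cong Z(A)\times_E Z(B)$ (and it is plainly $R$-linear and multiplicative, hence an algebra isomorphism). There is no genuine obstacle here: the only step that needs to be noticed is that commuting with the single idempotent $p$ already annihilates the off-diagonal component $m$, after which the remaining conditions are exactly the defining relations of the pullback.
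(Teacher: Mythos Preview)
Your proof is correct and follows the same line as the paper's: obtain the explicit description $Z(\T)=\{\left(\begin{smallmatrix} a & 0 \\ & b \end{smallmatrix}\right): a\in Z(A),\ b\in Z(B),\ am=mb\ \forall m\in M\}$ and then observe that this set is exactly $Z(A)\times_E Z(B)$. The only difference is that the paper quotes this description of $Z(\T)$ from \cite[Proposition~3]{Ch2}, whereas you derive it directly from the Peirce decomposition; your argument is thus a self-contained version of the same proof.
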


\begin{proof}
From \cite[Proposition 3]{Ch2}, we have that 
\[
\Z(\T) = \left\{
\left(
\begin{array}{@{}cc@{}}
a & 0  \\
  & b
\end{array}
\right) \in \T  \,\middle|\, a \in \Z(A), \, b \in \Z(B) \, \mbox{ and } \, am = mb, \,
\forall \, m\in M
\right\}.
\]
In terms of the multiplication maps $L$ and $R$, we have that 
\[
\Z(\T) = \left\{
\left(
\begin{array}{@{}cc@{}}
a & 0  \\
  & b
\end{array}
\right) \in \T  \,\middle|\, a \in \Z(A), \, b \in \Z(B) \, \mbox{ and } \, L(a) = R(b)
\right\},
\]
and so $Z(\T) \cong Z(A) \times_E Z(B)$.   
\end{proof}

\begin{notation}\label{never}
Let $\A$ be an algebra. We write $\A^-$ to denote the resulting Lie algebra with the same underlying vector space as $\A$ but product $[x, y] = xy - yx$.
\end{notation}

\begin{proposition}
Let $\T = \Tri(A, M, B)$ be a triangular algebra. Then 
\[
\innder_0(\T) \cong (A^- \times B^-)/(Z(A) \times_E Z(B)) 
\]
\end{proposition}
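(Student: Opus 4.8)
The plan is to construct an explicit surjective Lie algebra homomorphism
$\Phi\colon A^-\times B^-\to\innder_0(\T)$ and identify its kernel with $Z(A)\times_E Z(B)$. This mirrors the group-level result proved just above, where $\G\cong\innaut_0(\T)$ arose from the epimorphism $\Phi(x,y)=\big(L_xR_{y^{-1}},(\inn(x),\inn(y))\big)$; the infinitesimal counterpart should be obtained by applying the Lie functor, but I would first give the direct computation. For $(a,b)\in A\times B$ define $\Phi(a,b)$ to be the inner derivation $\ad\begin{pmatrix} a & 0 \\ & b\end{pmatrix}$ of $\T$. An easy computation with the Peirce decomposition shows that this inner derivation sends $\begin{pmatrix} a' & m \\ & b'\end{pmatrix}$ to $\begin{pmatrix} [a,a'] & am-mb \\ & [b,b']\end{pmatrix}$, so it vanishes on $p$, i.e.\ it lands in $\der_0(\T)$; in fact it lands in $\innder_0(\T)=\innder(\T)\cap\der_0(\T)$ by definition.

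Next I would check that $\Phi$ is a Lie algebra homomorphism. Since $(a,b)\mapsto\begin{pmatrix} a & 0 \\ & b\end{pmatrix}$ is an algebra embedding of $A\times B$ into $\T$, and $\ad$ is a Lie homomorphism from $\T^-$ to $\der(\T)$, the composite $\Phi$ is a homomorphism from $(A\times B)^-=A^-\times B^-$ to $\der_0(\T)$. Surjectivity onto $\innder_0(\T)$ is the heart of the argument: a generic inner derivation vanishing on $p$ is $\ad(t)$ for some $t\in\T$ with $[t,p]=0$, and since $[t,p]=0$ forces the off-diagonal $M$-component of $t$ to be zero, such $t$ is diagonal, i.e.\ $t=\begin{pmatrix} a & 0 \\ & b\end{pmatrix}$ for some $a\in A$, $b\in B$. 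Hence every element of $\innder_0(\T)$ is of the form $\Phi(a,b)$, giving surjectivity.

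Finally I would compute the kernel. From the explicit formula, $\Phi(a,b)=0$ if and only if $[a,a']=0$ for all $a'\in A$, $[b,b']=0$ for all $b'\in B$, and $am=mb$ for all $m\in M$; equivalently $a\in Z(A)$, $b\in Z(B)$, and $L_a=R_b$ as elements of $E=\End_R(M)$. By the previous lemma describing $Z(\T)$, this is exactly the condition defining the pullback $Z(A)\times_E Z(B)$, so $\ker\Phi\cong Z(A)\times_E Z(B)$. The first isomorphism theorem then yields
\[
\innder_0(\T)\cong (A^-\times B^-)/(Z(A)\times_E Z(B)),
\]
as claimed. I expect the main obstacle to be the surjectivity step, specifically justifying that $[t,p]=0$ really forces $t$ to be diagonal; this uses that $p\T p\oplus q\T q$ is precisely the centralizer of $p$ in $\T$ under the Peirce decomposition, together with the fact that conjugating a diagonal $t$ by the idempotent structure leaves only the commutators on $A$ and $B$ and the balanced action $am-mb$ on $M$.
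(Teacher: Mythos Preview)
Your proposal is correct and follows essentially the same route as the paper's direct argument: the paper observes that $\innder_0(\T)=\ad\left(\begin{smallmatrix} A & 0 \\ & B\end{smallmatrix}\right)$ and then invokes the description of $Z(\T)$ as the pullback $Z(A)\times_E Z(B)$, which is exactly your $\Phi$ together with the kernel computation. The paper also notes the alternative of applying the Lie functor to the exact sequence \eqref{sequence}, which you allude to as well; your explicit verification of surjectivity (that $[t,p]=0$ forces $t$ to be diagonal) simply unpacks what the paper calls ``straightforward''.
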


\begin{proof}
Let $\ad: \T \to \der(\T)$ be the adjoint map. It is straightforward to check that $\innder_0(\T) = \ad \left(
\begin{array}{@{}cc@{}}
A & 0  \\
  & B
\end{array}
\right)$. Thus:
\[
\innder_0(\T) \cong A^- \times B^- /(Z(A) \times_E Z(B)) 
.\]
Alternatively, we could prove this result applying the Lie functor to \eqref{sequence} and using Proposition \ref{squareprop}. This would
gives us the following exact sequence
$$0\to Z(A)\times_E Z(B)\to A^-\times B^-\to\innder_0(\T)\to 0,$$
and the proof follows. 
\end{proof}

\section*{Acknowledgements}
The first and second authors are supported by the Junta de Andaluc\'{\i}a and Fondos FEDER, jointly, through projects  FQM-336 and FQM-7156 and also by the Spanish Ministerio de Econom\'ia y Competitividad and Fondos FEDER, jointly, through project MTM2016-76327-C3-1-P.
The third and fourth authors were supported by a CSUR grant from the NRF,
the National Research Foundation of South Africa. The third author thanks the Departmento de \'Algebra, Geometr\'ia y Topolog\'ia de la Universidad de M\'alaga (Spain) for its hospitality during her visit from June to July 2017.

\end{document}